\newtheorem{theorem}{Theorem}[section]
\newtheorem{proposition}[theorem]{Proposition}
\newtheorem{lemma}[theorem]{Lemma}
\newtheorem{corollary}[theorem]{Corollary}
\newtheorem{definition}[theorem]{Definition}
\newtheorem{conjecture}[theorem]{Conjecture}
\newcommand\blfootnote[1]{%
  \begingroup
  \renewcommand\thefootnote{}\footnote{#1}%
  \addtocounter{footnote}{-1}%
  \endgroup
}
\begin{document}

\title{On the rigidity of rank  gradient in a  group  of  intermediate  growth}
\author{Rostislav Grigorchuk\footnote{The author was partially supported by Simons Collaboration Grant \# 527814}\\{Texas A\&M University}, \\Rostyslav  Kravchenko\blfootnote{To appear in Ukrainian Mathematical Journal v.70, no. 2 (2018)}\\{Northwestern University}}

\maketitle
 \begin{center}
{\it Dedicated to  A.M. Samoilenko on the occasion  of  his  80th  birthday}
\end{center}

\abstract{We  introduce and   investigate the rigidity  property of  rank  gradient  in  the  case   of  the  group $\mathcal  G$ of  intermediate  growth constructed by  the  first  author  in  \cite{grigorch:burnside}.  We  show  that  $\mathcal  G$   is  normally $(f,g)$-RG  rigid   where  $f(n)=\log(n)$  and  $g(n) =\log(\log(n)).$ }

\section {Introduction} A  group  is said to be  residually  finite   if  it  has  sufficiently  many  subgroups of  finite  index  so  that the  intersection of  them  is trivial. This  is  important  class  of  groups studied throughout more  than eight  decades by  various  tools  and  means. Residually  finite  groups  are the  reach  source of  examples  in group  theory.  In particular  they  are  often used in the  three main branches  of   modern  group  theory:   geometric  group  theory, asymptotic group theory  and  measured group  theory.  Such  groups   have   realization  by   actions  on  spherically  homogeneous  rooted  trees as  indicated  in  \cite{grigorch:jibranch,grigorch_nek_sus00},  which, in many  cases,  gives a possibility  to  study them  and  their  subgroup  structure  using   the  structure  of  the  tree.  They  are  also  closely  connected  to  the  theory  of  profinite  groups.

A  very  important  invariant  of  residually  finite  group  is a  subgroup  growth  introduced  by  F.~Grunewald, D.~Segal  and  G.~Smith  \cite{grunewald_seg_smith:88}  and  studied by many  researches  (see a  comprehensive  book  \cite{lubotzky_segalbook} and  the  literature  therein  on this  subject).   Recently another  asymptotic  characteristics  of  residually finite  groups  were  introduced with  the  focus  on the notion of  rank gradient.

 The rank  gradient  of  a finitely generated residually  finite  group  $G$  is defined  as

\begin{equation}
\label{rankgrad}
 RG(G)=\inf \frac{d(H)-1}{[G:H]}
 \end{equation}

where  the  infimum  is  taken  over  all   subgroups $H$ in $G$  of  finite  index and  $d(H)$ is the  rank  of  $H$ (i.e. the  minimal  number  of  generators  of $H$). It  is a  finite  number because  a  subgroup of finite  index  in a  finitely  generated  group  is  finitely  generated, and the first  question  that  arises  is  whether $RG(G)=0$ or not.

This  notion, as  well   as   the  notion  of  the  rank  gradient  relative  to  the  descending  chain of  subgroups (defined  by (\ref{rankgrad2}) ), were  introduced  for  the  first  time  by  M.~Lackenby \cite{lackenby:rank05}   with motivation  from  3-dimensional  topology.   Since   that  the  rank  gradient  and  its  variations  were  intensively  studied  \cite{abert_nik:rank,grigorch_krav_lamp14,grigorch:MIAN11,grigorch_nek_sus00,grigorch:solved,lackenby:rank05}.

The  definition  (\ref{rankgrad})
  can be  modified  in various directions.  Instead  of  $\inf$ one  can  consider  $\sup$,  instead of  all  subgroups one can consider  only  normal  subgroups,  or  subgroups    with   index  a  power  of a  prime  number $p$,  etc.     Another  direction   of modifications  is  to  consider descending sequence  $H_n, n=1,2,\dots$ of  subgroups  of  finite  index  and  associated  sequence $rg(n)$  of  numbers defined as

\begin{equation}
\label{rankgrad2}
 rg(n)=RG(G,\{H_n\})= \frac{d(H_n)-1}{[G:H_n]}
 \end{equation}

and  its  limit

\begin{equation*}
\lim_{n \to \infty} rg(n),
 \end{equation*}

if  it exists,  or the upper  and the lower  limits

\begin{equation*}
 \limsup_{n \to \infty} rg(n),
 \end{equation*}

\begin{equation*}
 \liminf_{n \to \infty} rg(n)
 \end{equation*}
otherwise.

It  is  known  \cite{lackenby:rank05,abert_nik:rank}  that if  $G$  is  amenable (the  notion  of  amenable  group  was  introduced  by  von Neumann \cite{vonNeumann:1929} and  by  Bogolyubov  in topological  case  \cite{bogolyubov:amenable39}), and  $\{H_n\}$   is a  sequence  of    subgroups  of  finite  index  satisfying  some technical  condition (sometimes  called the Farber  condition, it  is  equivalent  to  the  essential  freeness  of the  action of  the group on  the boundary of  coset  tree \cite{grigorch:MIAN11}), then  the  limit (\ref{rankgrad2}) exists and  is  equal  to  $0$.  The  example  of  the  lamplighter  group  $\mathcal L=\mathbb{Z}/2\mathbb{Z}\wr \mathbb{Z}$  show  that $rg(n)$  may  have  arbitrary  fast  decay,  just  because  the  group   has a  subgroup  of  index  2  isomorphic  to  itself,  so iterating  this  fact  one  gets  a  descending  sequence of groups of  growing   index power  of  $2$  but a fixed  rank $=2$ (see for instance \cite{grigorch_krav_lamp14}).

We  suggest  the  following  definition.  Let  $f(n),g(n)$  be two  increasing  functions of natural  argument  $n$  taking    values in $\mathbb N$ and  having the  limit $ \infty$ when $n \to \infty$.
\begin{definition}(a) A  finitely  generated  group   $G$  is   $(f,g)-RG$-rigid   if   there  is $C\in \mathbb N$ such  that for  every  subgroup  $H<G$ of  finite  index

\begin{equation*}
g(d(H))<Cf(C[G:H])    \text{ and }  f([G:H])<Cg(Cd(H)).
\end{equation*}

(b)   $G$ is  normally $(f,g)-RG$-rigid  if previous  inequalities   hold for  each  normal  subgroup $H\triangleleft G$ of  finite  index.
\end{definition}

For  instance the  free  group  $F_r$  of rank  $r\geq 1$  is  $(f,g)-RG$-rigid  where  $f(n)=g(n)=n$    as  in  this  case  the  ratio  $(d(H)-1)/([F_r:H])$ is  constant  and  equal  to  $r-1$.  Also  $(n,n)-RG$-rigid are  all  groups  with $RG(G)>0$.  For  finitely  presented  groups  this  hold  if and  only  if  $G$  is ``large''  in  the  sense  of  S.~Pride,  i.e.   contains a subgroup  of  finite  index  that  surjects  onto a  noncommutative  free  group,  as  shown  in  \cite{lackenby:rank05}.

We  present  the  following two  results.  Let  $\mathcal{G}=\langle a,b,c,d \rangle$  be  an  infinite 2-group  constructed  by  the  first  author in \cite{grigorch:burnside}.
Recall  that  it  has  intermediate  growth  between polynomial  and  exponential  and has  many  other  interesting  properties \cite{grigorch:degrees,grigorch:jibranch,grigorch:solved}.   $\mathcal{G}$  has  many  other  ways  to  be  defined  but  for  us  it  will  be  important  that  it  has a natural  action  by  automorphisms  of  a  rooted binary  tree $\mathcal T$  shown by  Figure \ref{binary}, as   explained,  for  instance,  in  \cite{grigorch:jibranch}.

\begin{figure}
\begin{center}
\epsfig{file=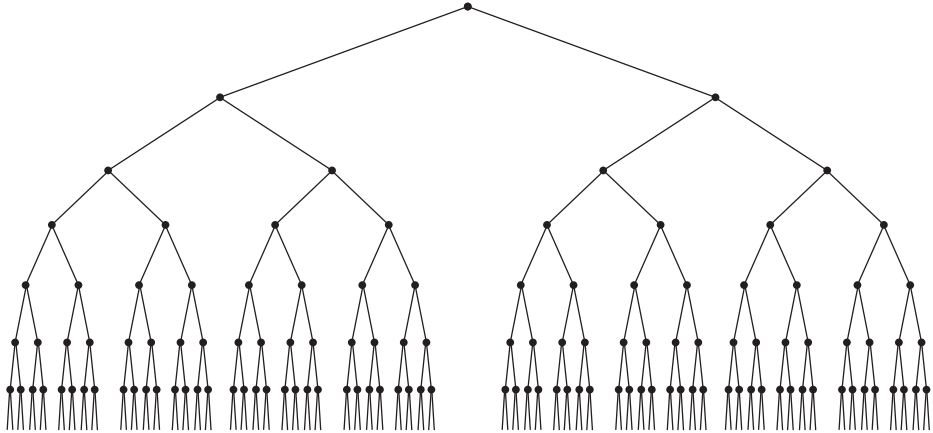 , width=350pt}
\caption{Binary Tree\label{binary}}
\end{center}
\end{figure}

\begin{theorem}\label{th1} The   group $\mathcal{G}$  is normally  $(f,g)-RG$-rigid  with

$$f(n)=\log\log(n), g(n)=\log(n),$$.

Moreover,  there there  is a  constant  $D>1$  such  that
\begin{equation}
\label{main}
\frac{1}{D}\log(d(H))\leq \log\log([\mathcal{G}:H])\leq D\log(d(H))
\end{equation}
hold  for  every nontrivial  normal  subgroup  $H\lhd \mathcal G$.
\end{theorem}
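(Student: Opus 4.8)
The plan is to exploit three structural features of $\mathcal G$: it is just-infinite, it has the congruence subgroup property, and it is a regular branch group over its finite-index subgroup $K=\langle (ab)^2\rangle^{\mathcal G}$. Just-infiniteness immediately settles the finite-index hypothesis: every nontrivial normal subgroup $H\lhd\mathcal G$ has finite index, so the double inequality (\ref{main}) is the only thing to prove. Recall the geometric branch subgroups $K_n\lhd\mathcal G$ defined by $\psi_n(K_n)=K\times\cdots\times K$ ($2^n$ factors), where $\psi_n\colon \mathrm{St}(n)\hookrightarrow\mathcal G^{2^n}$ is the level-$n$ section; since $\psi_n$ is injective one has $K_n\cong K^{2^n}$. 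Two elementary asymptotics drive everything: $\log_2[\mathcal G:K_n]\asymp 2^n$ (as $[\mathcal G:K_n]$ differs from $[\mathcal G:\mathrm{St}(n)]=2^{\Theta(2^n)}$ by at most a factor $16^{2^n}$), and $d(K_n)\asymp 2^n$ (the lower bound from $(K^{2^n})^{\mathrm{ab}}\otimes\mathbb{F}_2\cong(K^{\mathrm{ab}}\otimes\mathbb{F}_2)^{2^n}$, the upper bound from $d(K^{2^n})\le 2^n d(K)$).

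For a fixed nontrivial normal $H$ I would set $n=n(H)$ to be the least integer with $K_n\le H$; such $n$ exists by the standard fact that a nontrivial normal subgroup of a regular branch group contains a branch subgroup, and minimality gives $K_{n-1}\not\le H$. The whole theorem then reduces to showing $\log_2[\mathcal G:H]\asymp 2^n$ and $d(H)=2^{\Theta(n)}$, since these yield $\log\log[\mathcal G:H]\asymp n\asymp\log d(H)$ and hence (\ref{main}) with a suitable $D$.

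The index estimate and the upper rank bound are the routine half. From $K_n\le H$ we get $[\mathcal G:H]\le[\mathcal G:K_n]$, hence $\log_2[\mathcal G:H]\le C2^n$; the matching lower bound $\log_2[\mathcal G:H]\ge c2^n$ comes from $K_{n-1}\not\le H$ together with the quantitative normal-subgroup structure of $\mathcal G$, namely that a normal subgroup failing to contain $K_{n-1}$ must have index at least $2^{c2^{n}}$, which one proves by induction on the level using the branching $\psi$. For the rank, using that $K_n$ is normal in $H$ one has $d(H)\le d(K_n)+d(H/K_n)$; here $d(K_n)=O(2^n)$ and, since $H/K_n$ is a $2$-group inside $\mathcal G/K_n$, $d(H/K_n)\le\log_2|\mathcal G/K_n|=O(2^n)$. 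Thus $d(H)=O(2^n)$, which already gives $\log d(H)\le\log\log[\mathcal G:H]+O(1)$, i.e. one of the two inequalities.

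The hard part is the lower bound $d(H)\ge 2^{\Omega(n)}$; this is where normality is indispensable and where the real work lies. I would estimate $d(H)$ from below by $d(\bar H)=\dim_{\mathbb{F}_2}\bar H/\Phi(\bar H)$ for the finite $2$-group $\bar H=H/K_{n+1}\le\mathcal G/K_{n+1}$, which is a quotient of $H$ so that $d(H)\ge d(\bar H)$. The layer $K_n/K_{n+1}\cong(K/K_1)^{2^n}$ is a genuine $\mathbb{F}_2[\mathcal G/K_n]$-module on which $\mathcal G$ permutes the $2^n$ coordinates along its transitive action on level $n$. The obstacle is that large elementary-abelian sections do not by themselves force a large Frattini quotient (the Sylow $2$-subgroup of $S_{2^k}$ is $k$-generated yet contains huge elementary-abelian subgroups), so one must show that the image of this layer in $\bar H/\Phi(\bar H)$ still has dimension $2^{\Omega(n)}$. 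Concretely this means controlling the $H$-coinvariants of the permutation-type module $K_n/K_{n+1}$ and proving that the condition $K_{n-1}\not\le H$ prevents $H$ from acting transitively enough to collapse these coordinates down to $O(1)$ dimensions; the representation theory of the branch layers over $\mathbb{F}_2$, together with normality, is exactly what supplies the surviving $2^{\Omega(n)}$ independent $\mathbb{F}_2$-characters. Granting this, $n/D\le\log_2 d(H)\le n+O(1)$ for a suitable $D$, and combining with $\log_2\log_2[\mathcal G:H]\asymp n$ yields both inequalities of the definition and the explicit estimate (\ref{main}).
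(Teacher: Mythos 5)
Your reduction of the theorem to the two asymptotics $\log_2[\mathcal G:H]\asymp 2^n$ and $d(H)=2^{\Theta(n)}$ is sound, and your treatment of the index and of the upper rank bound $d(H)\le d(K_n)+d(H/K_n)=O(2^n)$ (using that $H/K_n$ is a $2$-group of order at most $|\mathcal G/K_n|$) is correct and in fact a little more elementary than what the paper does for that half. But the proposal has a genuine gap exactly where you flag it: the lower bound $d(H)\ge 2^{\Omega(n)}$ is never proved. You correctly identify the obstacle (a large elementary abelian section inside a $2$-group need not force a large Frattini quotient, as the Sylow $2$-subgroups of $S_{2^k}$ show), but you then simply assert that ``the representation theory of the branch layers'' supplies $2^{\Omega(n)}$ surviving generators and write ``Granting this\dots''. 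That is the heart of the theorem, and the coinvariants-of-the-permutation-module route you sketch is not obviously workable: in your setup $\bar H=H/K_{n+1}$ sits inside $\mathcal G/K_{n+1}$, whose nilpotency class grows with $n$, which is precisely the regime where the Sylow-type counterexamples live.

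The paper escapes this by using one more structural fact that is missing from your outline: a nontrivial normal subgroup is sandwiched \emph{from above} as well, $K_{n+3}<N<K_{n-3}$ (Proposition \ref{normal0}(b), coming from $K'=K_2$ and $K>st_{\mathcal G}(3)$). Passing to the quotient by $K'_{n+3}$ places $N/K'_{n+3}$ inside $K_{n-3}/K'_{n+3}\cong (K/K_8)^{2^{n-3}}$, which is nilpotent of class $s$ \emph{bounded independently of $n$}. The elementary Lemma \ref{upper1}, $d(H)<d(G)^c$ for subgroups of a $c$-step nilpotent group, applied to the subgroup $K_{n+3}/K'_{n+3}\cong(K/K')^{2^{n+3}}$ of rank $3\cdot 2^{n+3}$ inside $N/K'_{n+3}$, then forces $d(N)\ge d(N/K'_{n+3})\ge (3\cdot 2^{n+3})^{1/(s+2)}=2^{\Omega(n)}$. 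Bounded nilpotency class is the mechanism that defeats the obstacle you name; without the upper containment $N<K_{n-3}$ and this lemma, your argument does not close.
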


\begin{conjecture} The   group $\mathcal{G}$  is  $(f,g)-RG$-rigid  with the  same  functions $f,g$  as  in  previous  theorem.
\end{conjecture}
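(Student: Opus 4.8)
The plan is to upgrade Theorem \ref{th1} from normal to arbitrary finite-index subgroups by routing everything through the congruence subgroup property of $\mathcal G$ together with the growth of the level stabilizers. Write $\mathrm{St}(n)\lhd\mathcal G$ for the stabilizer of the $n$-th level of $\mathcal T$, and recall two standing facts: $\mathcal G$ satisfies the congruence subgroup property, so every finite-index subgroup contains some $\mathrm{St}(n)$; and $\log_2[\mathcal G:\mathrm{St}(n)]=\Theta(2^n)$, whence $\log\log[\mathcal G:\mathrm{St}(n)]=n\log 2+O(1)$. For a finite-index $H<\mathcal G$ let $\lambda=\lambda(H)$ be the least $n$ with $\mathrm{St}(n)\subseteq H$, the \emph{congruence level}. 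Since $H\supseteq\mathrm{St}(\lambda)$ forces $[\mathcal G:H]\le[\mathcal G:\mathrm{St}(\lambda)]$, one immediately gets the free inequality
\begin{equation*}
\log\log[\mathcal G:H]\le \lambda\log 2+O(1),
\end{equation*}
and the whole argument reduces to controlling $\lambda$, $d(H)$ and $[\mathcal G:H]$ against one another.

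First I would treat the rank upper bound, i.e. the inequality $g(d(H))<Cf(C[\mathcal G:H])$, which unwinds to $\log d(H)\lesssim\log\log[\mathcal G:H]$. Since $\mathrm{St}(\lambda)$ is normal in $\mathcal G$, hence in $H$, the elementary bound $d(H)\le d\bigl(H/\mathrm{St}(\lambda)\bigr)+d\bigl(\mathrm{St}(\lambda)\bigr)$ for the finite $2$-group quotient gives
\begin{equation*}
d(H)\le \log_2[\mathcal G:\mathrm{St}(\lambda)]+d\bigl(\mathrm{St}(\lambda)\bigr).
\end{equation*}
Applying Theorem \ref{th1} to the \emph{normal} subgroup $\mathrm{St}(\lambda)$ bounds $\log d(\mathrm{St}(\lambda))\le D\log\log[\mathcal G:\mathrm{St}(\lambda)]\asymp\lambda$, so $d(H)\le 2^{O(\lambda)}$ and $\log d(H)\lesssim\lambda$. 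To convert this into a bound in the index I need the reverse of the free inequality, the \textbf{quantitative congruence subgroup property}
\begin{equation}
\label{qcsp}
\lambda(H)\ \lesssim\ \log\log[\mathcal G:H]\qquad\Longleftrightarrow\qquad [\mathcal G:H]\ \ge\ 2^{2^{c\lambda(H)}},
\end{equation}
asserting that reaching congruence level $\lambda$ costs index at least doubly exponential in $\lambda$. Granting \eqref{qcsp}, the upper bound follows at once: $\log d(H)\lesssim\lambda\lesssim\log\log[\mathcal G:H]$.

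For the rank lower bound $f([\mathcal G:H])<Cg(Cd(H))$, i.e. $\log\log[\mathcal G:H]\lesssim\log d(H)$, the free inequality already reduces the task to $\lambda\lesssim\log d(H)$, equivalently $d(H)\ge 2^{c'\lambda}$. Here I would imitate the easy half of Theorem \ref{th1} through the branch structure rather than normality: because $H$ fails to contain $\mathrm{St}(\lambda-1)$ it interacts nontrivially with the rigid level stabilizer $\mathrm{rist}(\lambda-1)=\prod_{|v|=\lambda-1}\mathrm{rist}(v)$, a direct product of $2^{\lambda-1}$ infinite factors. Passing to $H\cap\mathrm{rist}(\lambda-1)$ and projecting to the abelianizations of the individual factors, I would show that $\gtrsim 2^{c'\lambda}$ of these projections are nontrivial and independent in the Frattini quotient $H/H'H^2$, whence $d(H)\ge\dim_{\mathbb F_2}H/H'H^2\gtrsim 2^{c'\lambda}$. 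The bookkeeping of how the finitely many sections recombine under the non-normal inclusion $H\cap\mathrm{rist}(\lambda-1)\hookrightarrow\mathrm{rist}(\lambda-1)$ is the technical heart of this step, but normality is not essential to it.

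The main obstacle is the quantitative congruence subgroup property \eqref{qcsp}. The qualitative version — that each finite-index subgroup contains some $\mathrm{St}(n)$ — is classical for $\mathcal G$, and for \emph{normal} subgroups the comparison $[\mathcal G:N]\ge 2^{2^{c\lambda(N)}}$ is available from Theorem \ref{th1} (via the $\mathrm{rist}'$ lower bound on $d(N)$); but normality is exactly what let one replace $N$ by a product of rigid stabilizers and read off the index. For a general $H$ one cannot simply pass to the normal core $N=\bigcap_{g}gHg^{-1}$, because the only available comparison $[\mathcal G:N]\le[\mathcal G:H]!$ converts the doubly-exponential index of $N$ into a merely singly-exponential lower bound $[\mathcal G:H]\gtrsim 2^{c\lambda}$ in place of $2^{2^{c\lambda}}$, losing one logarithm and falling short of \eqref{qcsp} by an entire exponential. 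Proving \eqref{qcsp} therefore seems to require a direct structural analysis of the finite quotients $\mathcal G_n=\mathcal G/\mathrm{St}(n)$ — showing that any subgroup of $\mathcal G_n$ not containing the kernel $\mathrm{St}(n-1)/\mathrm{St}(n)$ has index at least $2^{2^{cn}}$ — which is the step I expect to carry the real difficulty and the reason the statement is presently only a conjecture.
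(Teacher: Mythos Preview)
The statement is a \emph{conjecture}: the paper does not prove it, and explicitly says so, verifying it only for vertex stabilizers (Theorem~\ref{th2}). So there is no ``paper's own proof'' to compare against; the relevant question is whether your outline is a viable strategy.

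It is not, and the obstacle is more serious than you indicate. You route both inequalities through the congruence level $\lambda(H)$, and you flag the quantitative congruence subgroup property \eqref{qcsp} as the missing step. But \eqref{qcsp} is not merely unproven --- it is \emph{false}, and the very subgroups handled in Theorem~\ref{th2} witness this. For $H=P_n=st_{\mathcal G}(1^n)$ one has $\mathrm{St}(n)\subseteq P_n$ while $\mathrm{St}(n-1)\not\subseteq P_n$ (level transitivity provides an element of $\mathrm{St}(n-1)$ swapping $1^{n-1}0$ and $1^n$), so $\lambda(P_n)=n$; yet $[\mathcal G:P_n]=2^n$, not $2^{2^{cn}}$. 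The same example kills your lower-bound reduction ``$d(H)\ge 2^{c'\lambda}$'': by Theorem~\ref{th2}, $d(P_n)=n+4$, which is nowhere near $2^{c'n}$. In both directions your argument factors a true inequality (e.g.\ $\log d(P_n)\asymp\log\log[\mathcal G:P_n]$, since both are $\asymp\log n$) through an intermediate quantity $\lambda=n$ that is exponentially too large, so one step is vacuously loose and the other is simply false.

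The reason the congruence level works in the normal case is Proposition~\ref{normal0}: for $N\lhd\mathcal G$ the level at which $N$ first escapes $\mathrm{St}(n+1)$ pins down, within a bounded window, \emph{both} an $\mathrm{St}$ it contains and an $\mathrm{St}$ (indeed a $\mathrm{rist}$) that contains it. For non-normal $H$ there is no such two-sided sandwich --- $P_n$ sits just above $\mathrm{St}(n)$ yet has index only $2^n$ --- so $\lambda(H)$ decouples from $\log\log[\mathcal G:H]$. Any approach to the conjecture will need an invariant finer than the congruence level; your sketch, as it stands, cannot be repaired by proving \eqref{qcsp}, because \eqref{qcsp} does not hold.
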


If  the  conjecture  is  true  then  we have an interesting  rigidity  property concerning  the  rank and  index  of finite index subgroups. At  least  we  have  this  property  for  normal  subgroups as shows the above theorem.  Recall  that all  normal subgroups in $\mathcal G$ have finite  index  because  $\mathcal  G$  is  just-infinite (i.e. infinite, but  every  proper   quotients  finite) as  shown  in  \cite{grigorch:jibranch}. Interestingly,  the group  $\mathcal G$  was  used  by  M.~Lackenby  in  \cite{lackenby:rank05} to demonstrate  some  phenomenon that  may  hold for  the  rank  gradient.  The  present  article  develops   the observation made by M.~Lackenby concerning  the  rank  gradient  in $\mathcal G$.

At  the  moment  we  are  able  only  to  confirm  the  conjecture  for   important  subclass  of  subgroups  of  finite  index in $\mathcal G$,  namely for stabilizers of  vertices of  the  binary  rooted  tree  $\mathcal T$ on  which  the  group $\mathcal G$ acts. The  vertices  of  $\mathcal T$  are  in  bijection  with finite  words  over  binary  alphabet $\{0,1\}$. Let  $v$   be a  vertex  and $st_{\mathcal{G}}(v)$  be  its stabilizer which  has index  $2^n$ in $\mathcal{G}$  if  $v$  is  a  vertex  of  level  $n$.

\begin{theorem}\label{th2}   There  is a  constant $D$  such  that the  inequalities  (\ref{main})  hold  for  all subgroups $H=st_{\mathcal{G}}(v)$
where $v$  run over the  set of  vertices    in $\mathcal T$. In fact

\[
\frac{d(H)-1}{[\mathcal{\mathcal{G}}:H]}=\frac{n+3}{2^n}
\]
if $v$  is a vertex  of  level $n\geq 2$.
\end{theorem}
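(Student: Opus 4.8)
The plan is to reduce the whole statement to a single rank computation and then induct on the level using the self-similar structure of $\mathcal{G}$. First I would record that $\mathcal{G}$ acts transitively on each level of $\mathcal T$, so a vertex $v$ of level $n$ has $[\mathcal{G}:st_{\mathcal{G}}(v)]=2^n$, and that any two vertices of the same level have conjugate, hence isomorphic, stabilizers. Thus $d(st_{\mathcal{G}}(v))$ depends only on $n$; write it $r_n$. Since $[\mathcal G : st_{\mathcal G}(v)]=2^n$, the asserted identity is equivalent to $r_n=n+4$ for $n\ge 2$, for then $(d(H)-1)/[\mathcal G:H]=(n+3)/2^n$, and the two-sided bound \eqref{main} follows immediately from $d(H)=n+4$ and $[\mathcal G:H]=2^n$ (both sides behave like $\log n$). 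So the entire theorem reduces to proving $r_n=n+4$.

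Second, I would set up the recursion through the embedding $\psi\colon st_{\mathcal{G}}(1)\hookrightarrow \mathcal{G}\times\mathcal{G}$ and the branch structure. Taking $v=0^n$, an element fixes $0^n$ iff it lies in $st_{\mathcal{G}}(1)$ and its first section fixes $0^{n-1}$, so $\psi$ identifies $st_{\mathcal{G}}(0^n)$ with $P\cap\big(st_{\mathcal{G}}(0^{n-1})\times\mathcal{G}\big)$, where $P=\psi(st_{\mathcal{G}}(1))$. Recalling the standard structure of $P$ — it contains $B\times B$, where $B=\langle b\rangle^{\mathcal G}$ has index $8$, and both coordinate projections of $P$ are onto — projection to the first coordinate yields a short exact sequence
\[
1\longrightarrow B \longrightarrow st_{\mathcal{G}}(0^n)\longrightarrow st_{\mathcal{G}}(0^{n-1})\longrightarrow 1,
\]
whose kernel is the second-coordinate copy $1\times B$, on which $st_{\mathcal G}(0^n)$ acts by conjugation through its projection to $st_{\mathcal{G}}(0^{n-1})$. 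This is the engine of the induction.

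Third comes the rank estimate. For the lower bound I would pass to the Frattini quotient over $\mathbb{F}_2$ via the standard inequality $d(H)\ge \dim_{\mathbb F_2} H/H^2[H,H]$, and read off from the short exact sequence that this dimension increases by exactly one once $n\ge 3$. For the upper bound I would lift a minimal generating set of $st_{\mathcal{G}}(0^{n-1})$ to $st_{\mathcal{G}}(0^n)$ and adjoin elements whose normal closure in $st_{\mathcal G}(0^n)$ is the kernel $B$; the point is that for $n\ge 3$ the conjugation action inherited from $st_{\mathcal{G}}(0^{n-1})$ is rich enough that a single extra generator suffices. Matching the bounds gives $r_n=r_{n-1}+1$ for $n\ge 3$, while the first values $r_1$ and $r_2=6$ are computed directly from explicit generators and their abelianizations; the closed form $r_n=n+4$ then follows by induction.

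The hard part will be the exact increment ``$+1$'', needed in both directions at once: that the normal subgroup $B$ contributes \emph{exactly} one new generator. This is where the rigidity of the formula actually lives — for $n-1\ge 2$ the stabilizer $st_{\mathcal{G}}(0^{n-1})$ no longer contains $B$ (indeed $b$ moves the vertex $00$), which is precisely why the increment stabilizes to $+1$ only from level $3$ onward and why the level-$2$ value must be treated separately. Controlling the coinvariants of $B$ under this branch action — equivalently, showing that $B\cap H^2[H,H]$ has codimension exactly one in $B$ and that no generators beyond the abelianization count are required — is the crux, and it is here that the finiteness of $\mathcal{G}/B$ and the explicit identification $P=\{(g_0,g_1):\theta(\bar g_0)=\bar g_1 \text{ in } \mathcal G/B\}$ must be used.
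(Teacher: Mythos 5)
Your reduction of the theorem to the single claim $d(st_{\mathcal G}(v))=n+4$, and the two-pronged scheme for proving it (lower bound from the Frattini quotient mod squares, upper bound by lifting generators through an extension), is exactly the shape of the paper's argument, and your structural claims about $\psi(st_{\mathcal G}(1))$ (surjective projections, kernel of each projection equal to a copy of $B=\langle b\rangle^{\mathcal G}$, fiber product over $\mathcal G/B$) are correct. The problem is that the proposal stops precisely where the proof begins. The assertion that the mod-$2$ Frattini dimension ``increases by exactly one once $n\ge 3$,'' and dually that a single lifted element normally generates the kernel, \emph{is} the theorem; you flag it as ``the hard part'' and ``the crux'' but supply no mechanism for it. Nothing in the exact sequence $1\to B\to st_{\mathcal G}(0^n)\to st_{\mathcal G}(0^{n-1})\to 1$ forces the increment to be $1$: a priori the image of the kernel in $H/H^2[H,H]$ could be trivial (and then the formula would fail from some level on) or could have dimension up to $\dim_{\mathbb F_2}B/B^2[B,B]$, and deciding which requires computing exactly which elements of $1\times B$ become products of squares and commutators in $st_{\mathcal G}(0^n)$ --- i.e.\ the coinvariants of $B$ under conjugation by $\varphi_1(st_{\mathcal G}(0^n))$. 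That is an unavoidable explicit computation, and it is what the paper's Section 4 consists of: the elements $t,u,v$, $x_m$, $u_m$, $v_m$ and their tabulated conjugates, the recursions $R_n=(K\times R_{n-1})\{1,(ac)^4\}$, $Q_n=(K\times R_{n-1})\langle b,(ac)^4\rangle$, $P_n=(K\times Q_{n-1})\langle c,(ac)^4\rangle$, an explicit generating set of $P_n$ of size $n+4$, and the computation $\dim_{\mathbb F_2}P_n^{(2)}=n+4$ via Lemma \ref{lem}. Without your own version of that computation (including the base values at $n=1,2$, which you also only assert), the proposal is a plan, not a proof.

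One smaller inaccuracy worth fixing if you pursue this route: the kernel $1\times B$ sits in the \emph{second} coordinate, so $st_{\mathcal G}(0^n)$ acts on it by conjugation through $\varphi_1$, not ``through its projection to $st_{\mathcal G}(0^{n-1})$'' (which is $\varphi_0$). The outer action does descend to the quotient, but the actual subgroup of $\mathcal G$ effecting the conjugation is $\varphi_1(st_{\mathcal G}(0^n))$, and identifying it (via the fiber-product description of $\psi(st_{\mathcal G}(1))$ over $\mathcal G/B$) is itself part of the work needed to compute the coinvariants. The paper sidesteps this bookkeeping by working with the vertex $1^n$ and the explicit semidirect-product decompositions $P_n=R_n\rtimes\{1,b,c,d\}$, which localize the whole question to the action of the four-element group $\{1,b,c,d\}$ and of $(ac)^4$ on a known generating set.
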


Observe that this  result  is  announced  in \cite{grigorch_krav_lamp14}.



\section{The  group $\mathcal G$}

We  recall  some  basic  facts  about  the  group  $\mathcal G$  and  its  subgroups.   $\mathcal  G$  can be  defined  as a  group of  automorphisms  of a  rooted  binary  tree $\mathcal T$  shown  in Figure \ref{binary} (the  root,   corresponding  to  the  empty  word,   is a fixed  point  for  the action). The  generators   $a,b,c,d$ of $\mathcal G$  are  involutions, the  elements  $b,c,d$ commute  and  together  with the identity  element  they  constitute the  Klein  group  $\mathbb{Z}_2\times\mathbb{Z}_2$. The  stabilizer  of  the  first  level $H=st_{\mathcal G}(1)$ is a  subgroup  of  index  2  in  $\mathcal  G$  generated  by  elements  $b,c,d,b^a,c^a,d^a$ (where  $x^y=x^{-1}yx$), and  the   restrictions of  $H$  on  the  left  and  right  subtrees $\mathcal {T}_0,\mathcal {T}_1$ with  the  roots  at  vertices  $0,1$  determine surjective  homomorphisms $\varphi_0,\varphi_1:H\rightarrow \mathcal{G}$.  The direct  product    $\psi=\varphi_0\times \varphi_1$ of  them  determines the embedding $H\rightarrow  \mathcal G \times \mathcal G$  and  acts on generators as:

 \begin{equation}
 \label{recursion1}
 b\rightarrow (a,c), c \rightarrow (a,d), d\rightarrow (1,b), b^a\rightarrow (c,a), c^a\rightarrow (d,a), d^a\rightarrow (b,1)
 \end{equation}

   Together   with  the  information  that  the  generator  $a$    permute  the subtrees   $\mathcal {T}_0,\mathcal {T}_1$ (without  extra  action  inside them),  this  uniquely  determines the  group   $\mathcal  G$.

 The  action   of  $\mathcal  G$  on $\mathcal  T$ also  can  be  described  by  the  following  recursive  rules:
 \begin{equation*}
 \begin{array}{ll}
 a(0w)=1w,& a(1w)=0w,\\
 b(0w)=0a(w),& b(1w)=1c(w),\\
 c(0w)=0a(w),& c(1w)=1d(w),\\
 d(0w)=0w,& d(1w)=1b(w),
 \end{array}
 \end{equation*}

where   $w \in \{0,1\}^{\ast}$,   and  $\{0,1\}^{\ast}$  denotes the  set  of  all   finite words  over  the  binary  alphabet.  The  important  property of  the  action  of  $\mathcal  G$  on $\mathcal  T$  is  level  transitivity,  i.e. transitivity  of  the  action on  each  level  $V_n=\{0,1\}^n$.

Additionally  to  the  stabilizers  $st_{\mathcal G}(n)$ of  levels  $n=1,2,\dots$, an important  descending series  of  normal subgroups  is  the  series  of  rigid  stabilizers $rist_{\mathcal G}(n)$    which   are  subgroups    generated  by  rigid  stabilizers  $rist_{\mathcal G}(v), v\in \{0,1\}^n $  of  vertices  of the $n$th  level,   and     $rist_{\mathcal G}(v)$  is a  subgroup in  $\mathcal G$  fixing  vertex  $v$  and  consisting  of  elements acting trivially  outside the   subtree  $\mathcal  T_v$ in  $\mathcal  T$  with a  root at  $v$. The  rigid  stabilizers  of distinct  vertices of  the  same  level  commute  and  are  conjugate  (because  of the  level  transitivity).  Thus  algebraically  the $rist_{\mathcal G}(n)$   is a  direct  product  of  copies  of  the  same  group (which  may  depend  on  the  level  $n$  in general   case).  Observe  that   $\{st_{\mathcal G}(n)\}_{n=1}^{\infty}$ and  $\{rist_{\mathcal G}(n)\}_{n=1}^{\infty}$  are  descending  chains  of  normal  subgroups of  finite  index with trivial  intersection. The  structure  of  groups   $st_{\mathcal G}(n)$ and $rist_{\mathcal G}(n)$  is  well  understood  and   described in  \cite{bartholdi_g:parabolic}.

 Let  $B=\langle b \rangle^{\mathcal G}$  be  a normal  closure  of  generator  $b$  and  $K=\langle(ab)^2\rangle^{\mathcal G}$.  For   each $n$ there  is a  natural  embedding $\psi_n: st_{\mathcal G}(n)\rightarrow \mathcal G \times \dots \times \mathcal G$ into a direct  product  of  $2^n$  copies of $\mathcal G$  which  is the $n$th  iteration of  the  embedding $\psi$ and has a  geometric  meaning  of  the attaching  to  the  element  $g \in st_{\mathcal G}(n)$  the  $2^n$-tuple $(g_1,\dots,g_{2^n})$  of  its  restrictions  on the  subtrees  with  the  roots  at the $n$th  level. Instead  writing  $$\psi_n(g)=(g_1,\dots,g_{2^n})$$  we  will  write $$ g=(g_1,\dots,g_{2^n}).$$  In  particular  the  relations (\ref{recursion1})  can  be  rewritten   as  $b=(a,c), c=(a,d),d=(1,b),b^a=(c,a),c^a=(d,a),d^a=(b,1)$.

 The  important  facts  about  groups    $st_{\mathcal G}(n)$ and $rist_{\mathcal G}(n)$   are  that  the  $\psi_{n-3}$  image of $st_{\mathcal G}(n)$  has  the  decomposition
 \begin{equation}
 \label{strange}
 st_{\mathcal G}(3)\times st_{\mathcal G}(3) \times \dots \times st_{\mathcal G}(3)
 \end{equation}
 (products of  $2^{n-3}$ copies of $st_{\mathcal G}(3)$)   when  $n\geq  4$,   and the $\psi_{n}$ image of $rist_{\mathcal G}(n)$ has  the  decomposition

 \begin{equation}
\label{strange1}
  K\times K \times \dots \times K
 \end{equation}

(product  of  $2^n$  copies  of  $K$)  when  $n\geq 2$ (see \cite{bartholdi_g:parabolic}). We  will  use  later  the notations 
 $K_n$  for $rist_{\mathcal G}(n)$ when  $n\geq 2$
and  keep  in mind  the  decompositions (\ref{strange})  and (\ref{strange1}).   We  also  denote  by  $K_1$  a  subgroup in $st_{\mathcal G}(1)$  whose $\psi$-image  is  $K\times K$.  The  group    $\mathcal  G$  is  regularly  branched  over  $K$  as   $K_1$  is a  subgroup  of  $K$,  and  $[\mathcal {G}:K_1] <  \infty$.   Thus     $\mathcal G$  contains   subgroups  shown  by  Figure  \ref{regular}.  To  each  level  $n$  corresponds  a  group  $K_n$   that  fixes  each  vertex  $v$ of  this  level  and  whose restriction  to  the   subtree $\mathcal{T}_v$ with  the  root   $v$  is  $K$ (if  to  identify   $\mathcal{T}_v$  with   $\mathcal{T}$).  Moreover $K_n$  is a  direct  product  of  these  projections.

\section{Proof  of  Theorem  \ref{th1}}

We     begin this  section  with  reminding  that  the  group $\mathcal G$  is branch  group  as  defined   in  \cite{grigorch:jibranch},   because  it  acts level transitive  on  the  tree  $\mathcal T$  and   rigid  stabilizers $rist_{\mathcal G}(n), n=1,2,\dots$  have  finite  index  in $\mathcal G$.
The  branch  structure  of   $\mathcal G$  that  we  will  use  is  given  by  the  Figure \ref{regular}  and  was  mentioned  in  previous  section.
The  proof of  the theorem \ref{th1}  is  based  on the  following

\begin{proposition}\label{normal0} Let $N\lhd \mathcal G$  be a  nontrivial  normal  subgroup.  Let $n$  be  a  smallest nonnegative  integer  such  that   $N < st_{\mathcal G}(n)$  but  $N$ is not a subgroup  of  $st_{\mathcal G}(n+1)$. Then

(a)
\begin{equation*}
st_{\mathcal G}(n+6) < N .
\end{equation*}
(b)  If  $n\geq  4$  then

\begin{equation*}
rist_{\mathcal G}(n+3) < N < rist_{\mathcal G}(n-3)
\end{equation*}

\end{proposition}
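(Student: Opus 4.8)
The plan is to deduce both parts from a single inclusion, the lower bound $rist_{\mathcal G}(n+3)\subseteq N$, after first disposing of the cheap ``distance $3$'' comparisons between level and rigid stabilizers. First I would record the elementary consequence of the decompositions (\ref{strange}) and (\ref{strange1}): applying $\psi_m$ turns $st_{\mathcal G}(m+3)$ into $st_{\mathcal G}(3)^{2^m}$ and turns $rist_{\mathcal G}(m)=K_m$ into $K^{2^m}$, whereas it sends $st_{\mathcal G}(m)$ to the larger group $\mathcal G^{2^m}$. Granting the one arithmetic input $st_{\mathcal G}(3)\subseteq K$ (a finite check in the congruence quotient $\mathcal G/st_{\mathcal G}(3)$; see \cite{bartholdi_g:parabolic}), injectivity of $\psi_m$ on $st_{\mathcal G}(m)$ gives the sandwich $st_{\mathcal G}(m+3)\subseteq rist_{\mathcal G}(m)\subseteq st_{\mathcal G}(m)$ for all $m\geq 1$. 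The proposition then becomes bookkeeping: the hypothesis gives $N\subseteq st_{\mathcal G}(n)$, so the choice $m=n-3$ (legitimate exactly when $n\geq 4$) yields the upper bound $N\subseteq st_{\mathcal G}(n)\subseteq rist_{\mathcal G}(n-3)$ of (b), while $m=n+3$ yields $st_{\mathcal G}(n+6)\subseteq rist_{\mathcal G}(n+3)$, so that (a) follows the instant the lower bound $rist_{\mathcal G}(n+3)\subseteq N$ is in hand.

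To prove that lower bound, where the branch structure is genuinely used, I would choose by definition of $n$ an element $g\in N\cap st_{\mathcal G}(n)$ with $g\notin st_{\mathcal G}(n+1)$. Since $g$ fixes level $n$, some vertex $v\in V_n$ has section $g_v\notin st_{\mathcal G}(1)$, i.e. $g$ interchanges the two subtrees $\mathcal T_{v0},\mathcal T_{v1}$. I will use that $rist_{\mathcal G}(w)$ contains a full copy of $K$ supported on $\mathcal T_w$ for every $w$ at level $\geq 1$ (immediate from $\psi(K_1)=K\times K$ and (\ref{strange1}), with no case distinction needed). For $h,h'\in rist_{\mathcal G}(v0)$ I would compute the iterated commutator $[[g,h],h']$: writing $[g,h]=A\cdot h$ with $A=g^{-1}h^{-1}g$ supported on $\mathcal T_{v1}$ (because $g$ carries $\mathcal T_{v0}$ onto $\mathcal T_{v1}$) and $h$ supported on $\mathcal T_{v0}$, the two factors have disjoint support and $h'$ commutes with $A$, so a one-line cancellation gives $[[g,h],h']=[h,h']$. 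Hence $N$ contains every commutator of elements of $rist_{\mathcal G}(v0)$, and therefore the derived group $rist_{\mathcal G}(v0)'$, which is the copy of $K'$ sitting at $v0$.

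Next I would invoke normality of $N$ together with level transitivity. As $\mathcal G$ permutes $V_{n+1}$ transitively and $K'\lhd\mathcal G$ (since $K\lhd\mathcal G$), conjugation carries the copy of $K'$ at $v0$ onto the copy of $K'$ at any prescribed vertex of $V_{n+1}$, and all of these lie in $N$. Having disjoint supports they generate their direct product, so $N$ contains $\psi_{n+1}^{-1}\!\big((K')^{2^{n+1}}\big)$. The final ingredient is the inclusion $K'\supseteq rist_{\mathcal G}(2)=K_2$; granting it, $(K')^{2^{n+1}}\supseteq (K_2)^{2^{n+1}}=\psi_{n+1}\big(rist_{\mathcal G}(n+3)\big)$, whence $N\supseteq rist_{\mathcal G}(n+3)$. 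This proves (b), and with the sandwich above it also proves (a).

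The step I expect to be the real obstacle is precisely the inclusion $K'\supseteq K_2$ (equivalently $\psi(K')\supseteq K\times K$). The formal branching one gets for free, namely $\psi(K)\supseteq K\times K$ and hence $\psi(K')\supseteq K'\times K'$, only reproduces commutators one level down and is \emph{not} sufficient; one must show that $K'$ regenerates a full rigid stabilizer a bounded number of levels below, which forces an explicit computation with the recursion (\ref{recursion1}) and the specific relations of $\mathcal G$. This is exactly where the offset $3$ (and thus the $6$ of part (a)) is pinned down, so obtaining the sharp constant rather than merely some constant is the delicate point of the whole argument.
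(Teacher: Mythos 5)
Your proposal is correct and follows essentially the same route as the paper: the same sandwich coming from $st_{\mathcal G}(3)<K$ together with the decompositions (\ref{strange}) and (\ref{strange1}), and the same reduction of everything to $N\supseteq K'_{n+1}=K_{n+3}$ via $K'=K_2$. The only difference is presentational: you write out the double-commutator argument showing $N$ absorbs $rist_{\mathcal G}(v0)'$, whereas the paper delegates exactly that step to the proof of Theorem 4 in \cite{grigorch:jibranch}, and both arguments rest on the same cited facts ($K'=K_2$ and $K>st_{\mathcal G}(3)$, Proposition 9 of \cite{grigorch:jibranch}).
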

\begin{proof} For  the  part  (a) we first   address  the  reader to the  proof of the theorem 4  given in \cite{grigorch:jibranch}  as  we  will  follow  the  same  line in  our  arguments.  First,  let us   make  a  comparison  of  notations  used  in \cite{grigorch:jibranch} and  here.  The  branch  structure  for  the  group $\mathcal G$   is  given    in  our  case  by  the  pair  $(\{L_n\},\{H_n\})$   where  $L_n=K$  and $H_n=K_n$ for  all  $n$.  The  reader  should  keep  in  mind  the  picture given  by the figure \ref{regular}.

\begin{figure}
\begin{center}
\epsfig{file=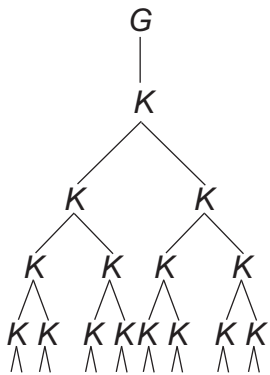 , width=100pt}
\caption{Structure of branching subgroups\label{regular}}
\end{center}
\end{figure}

Also,  the normal  subgroup in the  statement  of theorem  4  is  denoted  by  $P$  while  in the proposition  under  consideration  it  is  denoted  $N$.   The  proof of   theorem  4 from \cite{grigorch:jibranch}  (modulo  of the  change  of  notations)  shows  that if   $N< st_{\mathcal G}(n)$  but $N$ is not a subgroup of $st_{\mathcal G}(n+1)$, then  $N$ contains commutator subgroup  $K'_{n+1}=[K_{n+1},K_{n+1}]$ (recall  that $rist_{\mathcal G}(n)=K_n$  if  $n>1$).  By  proposition  9  from \cite{grigorch:jibranch}
$K'=K_2$,  hence   $K'_{n+1}=K\times K \times \dots \times K$ ($2^{n+3}$  factors)  so  $K'_{n+1}=K_{n+3}$,  and  $N > K_{n+3}$, which gives the lower inclusion in part (b).

Also  by  proposition  9  from \cite{grigorch:jibranch}   we  have  the  inclusion  $K>st_{\mathcal G}(3)$   from which,  together  with  the fact  presented  by  factorization (\ref{strange})   we  conclude  that $K_{n+3}>st_{\mathcal G}(n+6)$ and thus   $N>st_{\mathcal G}(n+6)$.  The  part  (a)  is thus established.
To  get  the upper inclusion in part  (b)  under  assumption  that  $n>3$ we observe  that  the  inclusion $K>st_{\mathcal G}(3)$  and  factorization (\ref{strange})  imply  that   $K_{n-3}> st_{\mathcal G}(n)$, therefore  we  are  done.
\end{proof}

The  group  $\mathcal  G$ is not  virtually  cyclic  (by many  reasons,  for  instance  because  it  is  finitely  generated  infinite  torsion  group).  Let  $N\lhd G$  be  a  nontrivial  normal  subgroup.  It  is  automatically  of  finite  index,  as  $\mathcal  G$  is just-infinite   group
\cite{grigorch:jibranch}  (i.e.  infinite  group  with  every  proper quotient  finite).  In  fact  we  can  assume from  the  beginning  that  $N$  is a normal subgroup of  finite  index  when  proving  theorem \ref{th1} (so  the  just-infiniteness property is  not  needed).  If  $n$ in the  statement  of  proposition  \ref{normal0} is less  than  $4$,   then  there  are  only  finitely  many  subgroups in $\mathcal  G$   containing $st_{\mathcal G}(5)$, their ranks are $\geq 2$,  and  so there  is a  constant $D$  satisfying  the  condition  of  the  theorem \ref{th1}.
Therefore  we  can  assume  that  $n\geq 4$.  Now apply  part  (b)  of  the proposition.

The  quotient {\color{black}$rist_{\mathcal G}(n-3)/ rist_{\mathcal G}(n+3)=K_{n-3}/K_{n+3}$ } is  isomorphic to

$$A:=(K/K_6)^{\color{black}2^{n-3}}=K/K_6 \times \dots \times K/K_6,$$
($2^{n-3}$  factors). The  group   $K/K_6$ is  a  finite  $2$-group  of  certain  nilpotency class $l$ (for us it  is not  important  the  exact  value of $l$).  Therefore $A$  is nilpotent of  the  class  $l$  as  well and  it  is  generated  by  not more  than  $3\cdot2^{n-3}$ elements  as  $K$ is  3-generated  group \cite{grigorch:jibranch}.  It  is  well  known  that  a   subgroup  of  finitely  generated nilpotent group  is  finitely  generated  and  there is a  universal  upper  bound  on  the  ranks  of  subgroups  of  nilpotent  group  $G$ in  terms  of  $d(G)$  and  class of  nilpotency $c$  of  $G$.

We  use  the  most  simple  upper  bound  given by  the  following  lemma
\begin{lemma}\label{upper1}  Let  $G$ be  a  finitely  generated  nilpotent  group of  class $c$.  Then  for  every  subgroup  $H<G$ the  upper  bound
\begin{equation*}
d(H)< d(G)^{c}
\end{equation*}
holds.
\end{lemma}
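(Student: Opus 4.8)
The plan is to induct on the nilpotency class $c$, passing to the final term of the lower central series and reducing the bound for $H$ to bounds for its image and its intersection with that term. Let $G$ be finitely generated nilpotent of class $c$ with $d(G)=d$, and let $\gamma_c=\gamma_c(G)$ be the last nontrivial term of the lower central series, so $\gamma_c$ is central in $G$ and $G/\gamma_c$ has class $c-1$. For a subgroup $H<G$ I would estimate $d(H)$ by combining a generating set for the image $\overline{H}=H\gamma_c/\gamma_c$ in $G/\gamma_c$ with a generating set for $H\cap\gamma_c$. Concretely, lift a minimal generating set of $\overline{H}$ to elements of $H$ and adjoin generators of $H\cap\gamma_c$; this yields $d(H)\le d(\overline{H})+d(H\cap\gamma_c)$.

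The first summand is controlled by the inductive hypothesis applied to $G/\gamma_c$, which has class $c-1$ and is generated by at most $d$ elements, giving $d(\overline{H})< d^{\,c-1}$. The key point for the second summand is that $\gamma_c$ is a finitely generated abelian group, so every subgroup, in particular $H\cap\gamma_c$, is again finitely generated and its rank is bounded by $d(\gamma_c)$. I would then bound $d(\gamma_c)$ in terms of $d$: since $\gamma_c$ is generated by the images of $c$-fold commutators in the generators of $G$, the number of such basic commutators is at most $d^{\,c}$, but for the purposes of the stated (deliberately loose) bound it suffices to note $d(\gamma_c)\le d^{\,c-1}$ or simply $d(H\cap\gamma_c)\le d(\overline H)$-type slack, so that the two contributions together stay strictly below $d^{\,c}$.

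The main obstacle, and the only genuinely delicate bookkeeping, is making the arithmetic of the two bounds close: one must verify that $d(\overline{H})+d(H\cap\gamma_c)< d^{\,c}$ rather than merely $\le d^{\,c}$, and that the base case $c=1$ (abelian $G$, where a subgroup of a $d$-generated abelian group needs at most $d<d^1$ generators once we note the strict inequality may require treating trivial cases separately) launches the induction correctly. Because the lemma only asks for the crude estimate $d(H)<d(G)^c$, there is ample room, and I would absorb any off-by-one issues by remarking that the inequality is strict and the interesting cases have $d\ge 2$; the whole argument reduces to the standard facts that finitely generated nilpotent groups are Noetherian and that subgroups of finitely generated abelian groups have bounded rank.
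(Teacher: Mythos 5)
Your strategy is in essence the paper's: filter along the lower central series, use $d(H)\le d(H N/N)+d(H\cap N)$, and bound the abelian layers by counting iterated commutators; the paper merely unrolls your induction on the class into an explicit telescoping sum over all quotients $\gamma_i(G)/\gamma_{i+1}(G)$. The genuine gap is exactly the point you flag and then wave away. Writing $d=d(G)$, the honest elementary count for the bottom layer is $d(\gamma_c(G))\le d^{\,c}$ (the weight-$c$ left-normed commutators in the generators generate $\gamma_c(G)$, since $\gamma_{c+1}(G)=1$), and your substitute claim $d(\gamma_c(G))\le d^{\,c-1}$ is false: in the free nilpotent group of class $2$ on $d\ge 4$ generators, $\gamma_2(G)\cong\mathbb{Z}^{\binom{d}{2}}$ and $\binom{d}{2}>d=d^{\,c-1}$. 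With the correct count your induction yields only $d(H)<d^{\,c-1}+d^{\,c}$, which is not $<d^{\,c}$, so the argument does not close as written; the alternative ``$d(H\cap\gamma_c)\le d(\overline H)$-type slack'' is not a justification.

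The standard repair is to strengthen the inductive statement to $d(H)\le d+d^2+\dots+d^{\,c}$ (each layer $H\cap\gamma_i(G)$ injects into the abelian group $\gamma_i(G)/\gamma_{i+1}(G)$, which needs at most $d^{\,i}$ generators), and then absorb the geometric sum, which costs at worst one extra power of $d$ or a factor of $2$ when $d\ge 2$; alternatively, the sharper Witt-type count $d(\gamma_i(G)/\gamma_{i+1}(G))\le d^{\,i}/i$ does make your two contributions sum to less than $d^{\,c}$ for $c,d\ge 2$. Any bound of the form $d(G)^{O(c)}$ suffices for the application in the paper, so the lemma survives, but the exponent must be tracked honestly rather than asserted. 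You should also note that the strict inequality cannot hold verbatim in the base case: for $H=G=\mathbb{Z}^{d}$ one has $d(H)=d(G)^{1}$, so the correct statement is $d(H)\le d(G)^{c}$ (as the paper's own treatment of $c=1$ tacitly concedes). These looseness issues are present in the paper's write-up as well, but your version turns them into an explicitly false intermediate claim, which must be fixed rather than glossed.
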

\begin{proof}
If $c=1$ then $G$ is abelian and thus $d(H)\leq d(G).$ For $G$ of class $c$ let $\gamma_1(G)=G$ and $\gamma_{i+1}(G)=[G,\gamma_i(G)]$ $i=1,2,\dots$ be the elements of the lower central series. Suppose $G$ is generated by set $S.$ Each factor $\gamma_i(G)/\gamma_{i+1}(G)$ is generated by the iterated commutators $[s_1,[s_2,\dots ]]$ of length $i+1$, where $s_j\in S.$  Thus $d(\gamma_i(G)/\gamma_{i+1}(G))\leq d(G)^{i+1}$. 
Denote $H_i=H\cap \gamma_i(G).$ Then $H_i\gamma_{i+1}(G)/\gamma_{i+1}(G)$ is an abelian group, and thus
$d(H_i\gamma_{i+1}(G)/\gamma_{i+1}(G))\leq d(\gamma_i(G)/\gamma_{i+1}(G))\leq d(G)^{i+1}.$ Note that 
\[
d(H)\leq d(H\gamma_1(G)/\gamma_1(G))+d(H\cap \gamma_1(G))\leq d(G)+d(H_1).
\]
Applying this iteratively we obtain
\begin{equation*}
\begin{aligned}
& d(H)\leq d(G)+d(H_1)\leq d(G)+d(H_1\gamma_2(G)/\gamma_2(G))+d(H_2)\leq \\
& d(G)+ d(\gamma_1(G)/\gamma_2(G))+d(H_2)\leq \\
& d(G)+d(G)^2+d(H_2)\leq\cdots\leq d(G)+\dots+d(G)^{c-1}\leq d(G)^{c}
\end{aligned}
\end{equation*}
\end{proof}

Using  this  lemma  we  get

$$d(N)< (3\cdot2^{n-3})^{l+2}=3^{l+2}2^{(n-3)(l+2)}<a_12^{a_2n}$$
for   some  positive  constants  $a_1,a_2$.

Now  we  are  going  to  give a  lower  bound for  $d(N)$.  We  factorize  the inclusions $K_{n+3}<N<K_{n-3}$  by $K'_{n+3}$   getting

$$K_{n+3}/K'_{n+3}<N/K'_{n+3}<K_{n-3}/K'_{n+3}$$

The  group $K_{n-3}/K'_{n+3}$  is a  direct  product of $2^{n-3}$  copies  of  the  group $K/K_8$  as $K=K_2$.    $K/K_8$ is a  finite  2-group.  Let $s$ be its  nilpotency  class,  so $K_{n-3}/K'_{n+3}$ also  has  nilpotency  class $s$ .  As $\bar{N}:=N/K'_{n+3}$ is a  subgroup of  $K_{n-3}/K'_{n+3}$ its  class  of  nilpotency  is  $\leq  s$.

The  group  $K_{n+3}/K'_{n+3}$  is a  direct  product  of $2^{n+3}$  copies  of  the  group $K/K'=K/K_2$.  Let  $t=d(K/K')$  (in fact  $t=3$).
Then $d(  K_{n+3}/K'_{n+3})=t2^{n+3}$.   Using  lemma \ref{upper1} we  conclude

 $$d(  K_{n+3}/K'_{n+3})=t2^{n+3}\leq (d(N))^{s+2}$$
from  which  we  conclude  that  there  are positive  constants $a_3,a_4$  such that

 $$ a_32^{a_4n} \leq  d(N)$$
Now  using  the  part  (a) of  the  Proposition \ref{normal0} we  provide upper  and  lower  bounds  for  the  index  $[\mathcal{G}:N]$.
 For  this  purpose  we  use  the  fact  that

$$[\mathcal{G}:st_{\mathcal G}(n)]= 2^{5\cdot2^{n-3}+2}$$
as  shown  at  the  end  of  the  proof  of  theorem  14  from  \cite{grigorch:jibranch}. Hence  part  (a)  of  the  proposition lead  us  to  the  existence  of  positive  constants  $a_5,a_6$  and  constants $a_7,a_8$  such  that
\begin{equation*}
2^{a_52^n+a_7}\leq  [\mathcal {G}:N]\leq 2^{a_62^n+a_8}
\end{equation*}
Taking  the  double  logarithm with  base  $2$ of  this  inequalities  and  applying the  same  logarithm  to  the previously  obtained  inequalities

$$ a_32^{a_4n} \leq  d(N) \leq  a_12^{a_2n}$$
a  simple  calculus  finishes  the  proof  of  the  theorem.

\section{Proof  of  Theorem  \ref{th2}}

We  begin  the  proof  of  Theorem \ref{th2}. The groups $R_n,Q_n$ and $P_n$ which are defined below are the finite index analogs of groups $R,Q,P$ introduced and studied in \cite{bartholdi_g:parabolic}. The recursive relations between them are analogous to the corresponding relations between $R,Q,P$ given in \cite{bartholdi_g:parabolic} by Theorems 4.4 and 4.5. The proof is  based  on  a number  of  computations   that  we  split  in  propositions  and lemmas. 

Let us  introduce  the  elements $t=(ab)^2$, $u=(bada)^2=(t,1)$, $v=(abad)^2=(1,t)$.   The  direct  computation  gives the  result of  their  conjugation by  generators, together with the conjugation of  $(ac)^4,$ as  shown in the next  subsections:

\subsection{Conjugates of $t,u,v$}

\begin{equation}\label{t}
\begin{aligned}
& t^a=t^{-1} \\
& t^b=t^{-1} \\
& t^c=t^{-1}v \\
& t^d=v^{-1}t \\
& t^{dd^a}=v^{-1}tu \\
\end{aligned}
\end{equation}

\begin{equation}\label{u}
\begin{aligned}
& u^a=v \\
& u^b=u^{-1} \\
& u^c=u^{-1} \\
& u^d=u \\
& u^{dd^a}=u^{-1} \\
\end{aligned}
\end{equation}

\begin{equation}\label{v}
\begin{aligned}
& v^a=u \\
& v^b=t^{-1}v^{-1}t \\
& v^c=t^{-1}vt \\
& v^d=v^{-1} \\
& v^{dd^a}=v^{-1} \\
\end{aligned}
\end{equation}

\subsection{Conjugates of $(ac)^4$}

Denote $x_0=(ac)^4$

\begin{equation*}
\begin{aligned}
& x_0^a=x_0 \\
& x_0^b=(1,u)x_0 \\
& x_0^c=x_0 \\
& x_0^d=(1,u)x_0 \\
& x_0^{dd^a}=(u,u)x_0. \\
\end{aligned}
\end{equation*}
Now let us  introduce  more  elements  and show  the  result  of  their  conjugation.

\subsection{Definition and conjugates of $x_m,u_m,v_m$}
Recall that $rist_{\mathcal G}(1^m)$ is the subgroup of $\mathcal G$ that fixes the vertex $1^m$ together with all vertices that do not start with $1^m.$  It is easy to check that $(ac)^4,u$ and $v$ belong to $K,$ and that $\psi(K)\supset K\times K.$ It follows that there are such $x_m,u_m$ and $v_m$ in $rist_{\mathcal G}(1^m)$ that $x_m(1^mw)=1^m(ac)^4(w),$ $u_m(1^mw)=1^mu(w)$ and $v_m(1^mw)=1^mv(w).$  Note that $x_0=(ac)^4$. 
We compute their conjugates below (the proof is by induction, since $x_m=(1,x_{m-1})$, $u_m=(1,u_{m-1})$ and $v_m=(1,v_{m-1})$):

\begin{equation}\label{xxm}
x_m^b=\left\{
\begin{array}{ll}
u_{m+1}x_m &\text{ if }3|m \\
x_m &\text{ if }3|(m-1) \\
u_{m+1}x_m &\text{ if }3|(m-2)
\end{array}\right.
\end{equation}

\begin{equation*}
x_m^c=\left\{
\begin{array}{ll}
x_m &\text{ if }3|m \\
u_{m+1}x_m &\text{ if }3|(m-1) \\
u_{m+1}x_m &\text{ if }3|(m-2)
\end{array}\right.
\end{equation*}

\begin{equation}\label{xxxm}
x_m^d=\left\{
\begin{array}{ll}
u_{m+1}x_m &\text{ if }3|m \\
u_{m+1}x_m &\text{ if }3|(m-1) \\
x_m &\text{ if }3|(m-2)
\end{array}\right.
\end{equation}
We have that 
\begin{equation}\label{x0}
x_0^{dd^a}=(u,u)x_0,
\end{equation}
and for $m>0$ 
\begin{equation}\label{xm}
x_m^{dd^a}=(1,x_{m-1}^b)=\left\{
\begin{array}{ll}
u_{m+1}x_m &\text{ if }3|m \\
u_{m+1}x_m &\text{ if }3|(m-1) \\
x_m &\text{ if }3|(m-2)
\end{array}\right.
\end{equation}

\begin{equation}\label{x1x0}
x_1^{x_0}=(1,x_0^{dd^a})=(1,1,u,u)(1,x_0)=(1,1,u,u)x_1
\end{equation}
When $n+1<m$ we have,

\begin{equation}\label{more}
x_m^{x_n}=\left\{
\begin{array}{ll}
x_m &\text{ if }3|(m-n) \\
u_{m+1}x_m &\text{ if }3\not{|}(m-n) \\
\end{array}\right.
\end{equation}
Here is the list of conjugates of $u_m$ and $v_m$:

\begin{equation}\label{uum}
u_m^b=\left\{
\begin{array}{ll}
u_m^{-1} &\text{ if }3|m \\
u_m^{-1} &\text{ if }3|(m-1) \\
u_{m} &\text{ if }3|(m-2)
\end{array}\right.
\end{equation}

\begin{equation}\label{um}
u_m^{dd^a}=\left\{
\begin{array}{ll}
u_m &\text{ if }3|m \\
u_m^{-1} &\text{ if }3|(m-1) \\
u_{m}^{-1} &\text{ if }3|(m-2)
\end{array}\right.
\end{equation}

\begin{equation*}
v_m^b=\left\{
\begin{array}{ll}
v_{m-1}^{-1}v_m^{-1}v_{m-1} &\text{ if }3|m \\
v_{m-1}^{-1}v_mv_{m-1} &\text{ if }3|(m-1) \\
v_{m}^{-1} &\text{ if }3|(m-2)
\end{array}\right.
\end{equation*}

\begin{equation}\label{vm}
v_m^{dd^a}=\left\{
\begin{array}{ll}
v_m^{-1} &\text{ if }3|m \\
v_{m-1}^{-1}v_m^{-1}v_{m-1} &\text{ if }3|(m-1) \\
v_{m-1}^{-1}v_m v_{m-1} &\text{ if }3|(m-2)
\end{array}\right.
\end{equation}
In the  next  two  subsections  we  introduce   sequences  of  subgroups  $R_n$  and $Q_n, n=1,2,\dots$ and  prove  some  structural results  about  them.
\subsection{Groups $R_n$}

Let $R_1=K=\langle t, u, v\rangle$. Let $R_{n}=(K\times R_{n-1})\{1,(ac)^4\}$ for $n\geq 2$. Then

\begin{proposition}\label{rn}
\[
R_2=\langle x_0, u_0, u_1, v_0, (u,u)\rangle,
\]
\[
R_n=\langle x_0,\dots, x_{n-2},u_0,u_1,u_2,v_{n-2},(u,u)\rangle
\]
for $n\geq 3$.
\end{proposition}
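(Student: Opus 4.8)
The plan is to prove the two equalities simultaneously by induction on $n$, working inside $st_{\mathcal G}(1)$ and comparing $\psi$-images. Every element on the two lists ($x_m,u_m,v_m$ and $(u,u)$) fixes the first level, so the group they generate — call it $\widetilde R_n$ — lies in $st_{\mathcal G}(1)$, and since $\psi$ is injective there it suffices to show $\psi(\widetilde R_n)=\psi(R_n)$. By definition $\psi(R_n)=(K\times R_{n-1})\cup(K\times R_{n-1})\psi(x_0)$. The one preliminary computation I would record is that $x_0=(ac)^4$ has both first--level sections equal to $dd^a$; indeed $d=(1,b)$ and $d^a=(b,1)$ by \eqref{recursion1}, so $dd^a=(b,b)$, and $(ac)^4$ acts as $(b,b)$ on each of the two top subtrees, i.e. $\psi(x_0)=(dd^a,dd^a)$. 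Two consequences drive the whole argument: $\psi(x_0)$ is an involution representing the nontrivial coset of $K\times R_{n-1}$, and conjugation by $x_0$ acts on the base $K\times R_{n-1}$ \emph{coordinatewise} by conjugation by $dd^a$, so that the tables \eqref{t}, \eqref{um}, \eqref{vm} for $t^{dd^a},u^{dd^a},v^{dd^a}$ apply verbatim in each coordinate.

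The inclusion $\widetilde R_n\subseteq R_n$ is immediate: the $\psi$-images of the listed generators are $(t,1),(1,u),(1,u_1),(1,v_{n-3}),(u,u)$ and $(1,x_{j})$, all lying in $K\times R_{n-1}$ by the inductive description of $R_{n-1}$, while $x_0$ is the coset representative. For the reverse inclusion I would first recover the left factor $K\times 1$. From $\psi((u,u))=(u,u)$ and $\psi(u_1)=(1,u)$ one gets $(u,1)=(u,u)(1,u)^{-1}$, and together with $\psi(u_0)=(t,1)$ this places $t,u$ in the first coordinate with trivial second coordinate. Conjugating $(t,1)$ by $x_0$ and invoking the last line of \eqref{t}, namely $t^{dd^a}=v^{-1}tu$, produces $(v^{-1}tu,1)$, whence $(v,1)$ and hence all of $K\times 1$. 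This identity $t^{dd^a}=v^{-1}tu$ is the crux of the proof: it is exactly what lets the single relator $t$ (with $u$) generate the third generator $v$ of $K$ under the $dd^a$-action induced by $x_0$.

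It then remains to produce the right factor $1\times R_{n-1}$, i.e. $(1,h)$ for each generator $h$ of $R_{n-1}$. The elements $(1,x_j)=\psi(x_{j+1})$, $(1,u_0)=\psi(u_1)$, $(1,u_1)=\psi(u_2)$ and $(1,v_{n-3})=\psi(v_{n-2})$ are already on the list. The two that are not, namely $(1,u_2)=u_3$ and $(1,(u,u))$, are supplied by the conjugation identities among the $x_m$: by \eqref{more} with $m=2,n=0$ one has $x_2^{x_0}x_2^{-1}=u_3$, and by \eqref{x1x0} one has $x_1^{x_0}x_1^{-1}=(1,1,u,u)=(1,(u,u))$. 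As $x_0,x_1,x_2$ are on the list for $n\ge 4$, these lie in $\widetilde R_n$; combined with $K\times 1$ and the coset representative $\psi(x_0)$ this yields $(K\times R_{n-1})\langle\psi(x_0)\rangle=\psi(R_n)$, completing the inductive step.

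The main obstacle I anticipate is not any single hard computation but the bookkeeping forced by the fact that $x_0$ conjugates both coordinates at once: before applying the $dd^a$-relation one must carefully isolate honest $K\times 1$ elements (trivial second coordinate), and one must track which $x_m$-identities are available at each stage. The delicate points are the small cases. For $n=3$ the generator $u_2$ of $R_2$ is absent, so $u_3$ is not needed and only $x_1$ is required; for the base case $n=2$ one has $R_1=K$ on the right, and the missing right--hand generator $(1,v)=v_1$ is obtained from $\psi(v_0)=(1,t)$ by the very same conjugation $t^{dd^a}=v^{-1}tu$, mirroring how $(v,1)$ was obtained on the left. Checking that \eqref{more} and \eqref{x1x0} genuinely apply for the available index ranges in these cases is the only step needing care, after which the induction closes.
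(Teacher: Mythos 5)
Your argument is correct and is essentially the paper's proof run in the opposite direction: where the paper starts from the obvious generating set of $(K\times R_{n-1})\{1,x_0\}$ and discards redundant generators, you start from the claimed list and regenerate the obvious set, but both rest on exactly the same identities — $\psi(x_0)=(dd^a,dd^a)$ with $t^{dd^a}=v^{-1}tu$ to recover $(v,1)$ (and $(1,v)$ in the base case), the trade of $(u,1)$ for $(u,u)$ via $u_1$, and the conjugations \eqref{x1x0} and \eqref{more} to supply $(1,(u,u))$ and $u_3$. Your handling of the small cases $n=2,3$ matches the paper's explicit treatment as well.
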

\begin{proof}
Since $R_2=(K\times K)\{1,x_0\}$, it follows that
\[
R_2=\langle x_0, (t,1), (u,1), (v,1), (1,t), (1,u), (1,v)\rangle.
\]
 Now, by \eqref{t} $t^{dd^a}=v^{-1}tu$, and since $x_0=(dd^a,dd^a)$, we have that $(t,1)^{x_0}=(v,1)^{-1}(t,1)(u,1)$, and the same for $(1,t)$. So we can discard $(v,1),(1,v)$ from the list. Notice also that multiplying $(u,1)$ by $(1,u)$ we can replace $(u,1)$ with $(u,u)$. Since $u_0=u=(t,1)$, $u_1=(1,u)$, and $v_0=v=(1,t)$, we obtain the generators for $R_2$.

 We have $R_3=(K\times R_2)\{1,x_0\}$, hence $R_3$ is generated by
 \[
 R_3=\langle x_0, (t,1), (u,1), (v,1), x_1, u_1, u_2, v_1,(1,1,u,u)\rangle.
 \]
 Where $x_1=(1,x_0), u_1=(1,u_0), u_2=(1,u_1)$ and $v_1=(1,v_0).$
 
 In the same way as for $R_2$ we can discard $(v,1)$ and replace $(u,1)$ by $(u,u)$. By \eqref{x1x0} $x_1^{x_0}=(1,1,u,u)x_1$, and so we can discard $(1,1,u,u)$, and we are done.

 Now suppose
 \[
R_n=\langle x_0,\dots, x_{n-2},u_0,u_1,u_2,v_{n-2},(u,u)\rangle
\]
Then from the formula $R_{n+1}=(K\times R_{n})\{1,(ac)^4\}$ we obtain that $R_{n+1}$ is generated by
\[
\langle x_0,(t,1),(u,1),(v,1),x_1,\dots,x_{n-1},u_1,u_2,u_3,v_{n-1},(1,1,u,u)\rangle.
\]
As above, we can discard $(v,1)$ and $(1,1,u,u)$ and replace $(u,1)$ with $(u,u)$.
It is left to note that $x_2^{x_0}=u_3x_2$ by \eqref{more}, so $u_3=[x_0,x_2]$ and hence we can discard it from the list.
\end{proof}

For any group $G$ define $G^{(2)}=G/G(X^2)$, where $G(X^2)$ is the subgroup generated by all squares of elements in $G$. Then $G^{(2)}$ is elementary abelian $2$-group, and so $\dim_{\mathbb{F}_2}G^{(2)}$ is defined.

\begin{proposition}
$\dim_{\mathbb{F}_2}R_n^{(2)}=n+4$ for $n\geq 3$, $\dim_{\mathbb{F}_2}R_2^{(2)}=5$, and $\dim_{\mathbb{F}_2}K^{(2)}=3$.
\end{proposition}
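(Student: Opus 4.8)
The plan is to separate the trivial upper bound from the substantive lower bound. By Proposition \ref{rn} the group $R_n$ is generated by the $n+4$ listed elements (and $R_2$, $K$ by $5$ and $3$ elements), and since $R_n^{(2)}$ is a quotient of $R_n$ generated by their images, $\dim_{\mathbb F_2}R_n^{(2)}\le n+4$ automatically. The whole content is therefore to show that these images are linearly independent, i.e.\ that the listed generating set is in fact a basis of the $\mathbb F_2$-vector space $R_n^{(2)}$.

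To get the lower bound I would run an induction on $n$ through the recursive presentation $R_n=(K\times R_{n-1})\{1,x_0\}$. Write $M_n=K\times R_{n-1}$. Because $x_0=(ac)^4=(dd^a,dd^a)$ satisfies $x_0^2=1$ (as $dd^a$ is an involution) and $x_0\notin M_n$ — equivalently $dd^a\notin K$, a direct check that is in any case forced, since $x_0\in M_2$ would give the wrong value $\dim R_2^{(2)}=6$ — the subgroup $M_n$ has index $2$ in $R_n$ and $\langle x_0\rangle\cong\mathbb Z/2$ is a complement, so $R_n=M_n\rtimes\langle x_0\rangle$. For such a split extension an elementary argument with presentations (every element of the elementary abelian group $R_n^{(2)}$ squares to $1$, hence it is abelian, so the conjugation relation $\bar m=\overline{m^{x_0}}$ kills exactly $(1+\sigma)M_n^{(2)}$) gives $R_n^{(2)}\cong \big(M_n^{(2)}/(1+\sigma)M_n^{(2)}\big)\oplus\mathbb F_2\bar x_0$, where $\sigma$ is the conjugation action of $x_0$ on $M_n^{(2)}$ and the $\mathbb F_2\bar x_0$ summand is split off by $R_n\to R_n/M_n\cong\mathbb F_2$. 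Since $M_n^{(2)}=K^{(2)}\oplus R_{n-1}^{(2)}$ and $x_0$ acts coordinatewise as conjugation by $dd^a$, this reduces everything to the rank of $1+\sigma$ on each factor, yielding the recursion $\dim R_n^{(2)}=\dim R_{n-1}^{(2)}+3-r_{n-1}$ with $r_m:=\operatorname{rank}(1+\sigma\ \text{on}\ R_m^{(2)})$.

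The crux is reading these ranks off the conjugation tables. On $K^{(2)}$, equations \eqref{t}, \eqref{u}, \eqref{v} give $t^{dd^a}=v^{-1}tu$, $u^{dd^a}=u^{-1}$, $v^{dd^a}=v^{-1}$, so modulo squares $(1+\sigma)\bar t=\bar u+\bar v$ while $\bar u,\bar v$ are fixed; hence the rank on $K^{(2)}$ is $1$. On $R_m^{(2)}$, equation \eqref{x0} gives $(1+\sigma)\bar x_0=\overline{(u,u)}$, the case $3\mid(m-1)$ of \eqref{xm} gives $(1+\sigma)\bar x_1=\bar u_2$, and for every remaining generator the image is $0$: by \eqref{xm} one has $x_j^{dd^a}\in\{x_j,\,u_{j+1}x_j\}$ for $j\ge2$, and the higher symbols $u_{j+1}$ ($j+1\ge3$) are commutators $u_{k+1}=[x_{k-2},x_k]$ by \eqref{more}, hence vanish in $R_m^{(2)}$, while \eqref{um} and \eqref{vm} show $u_0,u_1,u_2$, $v_{m-2}$ and $(u,u)$ are fixed modulo squares. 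Thus the image of $1+\sigma$ on $R_m^{(2)}$ is spanned by $\overline{(u,u)}$ and $\bar u_2$, so $r_m=2$ once both $x_0,x_1$ are present and their images are independent, i.e.\ for $m\ge3$, and $r_m=1$ for $m\le2$. Feeding $r_1=r_2=1$, $r_m=2\ (m\ge3)$ into the recursion produces the sequence $3,5,7,8,9,\dots$, i.e.\ $\dim R_n^{(2)}=n+4$ for $n\ge3$ and $\dim R_2^{(2)}=5$, once the base value $\dim K^{(2)}=3$ is installed.

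Two points deserve care. First, the statements about $R_m^{(2)}$ and about $r_m$ must be proved by the same induction: that $\overline{(u,u)}$ and $\bar u_2$ are independent in $R_m^{(2)}$ — needed to conclude $r_m=2$ — is exactly the claim that the generating set of $R_m$ is a basis, which is the inductive hypothesis, so I would phrase the induction as the conjunction ``the listed generators form a basis of $R_m^{(2)}$, and $r_m$ has the stated value.'' Second, the base cases are not formal: $\dim K^{(2)}=3$ (independence of $t,u,v$ modulo $K(X^2)$) and $\dim R_2^{(2)}=5$ live too shallow in the tree for the abelianization of $\mathcal G$ to detect them, so I would import the abelianization of $K$ from \cite{bartholdi_g:parabolic} (where the analogous groups $R,Q,P$ are analyzed) and verify $R_2$ directly from Proposition \ref{rn}. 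The main obstacle is precisely this rank computation together with the bookkeeping that all symbols $u_j$ with $j\ge3$ die in the abelianization; once that is in place, the vanishing of $1+\sigma$ on all but two generators makes the inductive step immediate.
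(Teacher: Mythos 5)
Your proposal is correct and follows essentially the same route as the paper: the same split-extension lemma $G^{(2)}\cong H^{(2)}/(1+\alpha)H^{(2)}\oplus\mathbb{Z}/2\mathbb{Z}$ applied to $R_n=(K\times R_{n-1})\rtimes\langle x_0\rangle$, the same rank computation of $1+dd^a$ from the conjugation tables (rank $1$ on $K^{(2)}$ and on $R_2^{(2)}$, rank $2$ on $R_m^{(2)}$ for $m\geq 3$ via $(u,u)$ and $u_2$, with the higher $u_j$ dying as commutators), and the same induction. Your extra bookkeeping --- phrasing the induction as ``the listed generators form a basis'' so that the independence of $(u,u)$ and $u_2$ needed for $r_m=2$ is available, and noting that $x_0\notin K\times R_{n-1}$ is required for the index-$2$ splitting --- only makes explicit steps the paper leaves tacit.
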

\begin{proof}
The equality for $K$ follows from the theory of the group $\mathcal{G},$ see \cite{bartholdi_g:parabolic,grigorch:jibranch,grigorch:solved}.

To prove the rest, we need the following obvious lemma
\begin{lemma}\label{lem}
Suppose $G=H\rtimes(\mathbb{Z}/2\mathbb{Z})$. Let $\alpha:H^{(2)}\rightarrow H^{(2)}$ be the operator on $H^{(2)}$ induced by the action of $\mathbb{Z}/2\mathbb{Z}$. Then $G^{(2)}=H^{(2)}/(1+\alpha)(H^{(2)})\oplus\mathbb{Z}/2\mathbb{Z}$.
\end{lemma}

Note that by \eqref{t}, \eqref{u}, \eqref{v}, $dd^a$ induces the following action on $K^{(2)}$: $[t,u,v]\mapsto[t+u+v,u,v]$. Hence $(1+dd^a)(K^{(2)})=\{0,u+v\}$. Thus $\dim_{\mathbb{F}_2}R_2^{(2)}=2+2+1=5$.

Now, it follows from \eqref{x0}, \eqref{xm} that $(1+dd^a)(x_0)=(u,u)$, $(1+dd^a)(x_1)=u_2$ and $(1+dd^a)(x_m)=0$ for $m>1$, since from \eqref{more} we have for $m\geq 3$ that $u_m=[x_{m-3},x_{m-1}]$.

Also, from \eqref{um}, \eqref{vm}, $(1+dd^a)(u_m)=0$, $(1+dd^a)(v_m)=0$ for all $m\geq 0$, and $(1+dd^a)((u,u))=0$.

Thus $(1+dd^a)(R_2^{(2)})=\{0,(u,u)\}$, and so $\dim_{\mathbb{F}_2}R_3^{(2)}=2+5-1+1=7$.
Analogously, $(1+dd^a)(R_n^{(2)})=\langle (u,u), u_2\rangle$ for $n\geq 3$, and so by induction $\dim_{\mathbb{F}_2}R_{n+1}^{(2)}=2+\dim_{\mathbb{F}_2}R_n^{(2)}-2+1=n+5$.
\end{proof}

\begin{corollary}
$d(R_1)=3$, $d(R_2)=5$, $d(R_n)=n+4$ for $n\geq 3$.
\end{corollary}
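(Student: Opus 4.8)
The plan is to combine two elementary facts: a lower bound on $d(R_n)$ coming from the elementary abelian quotient $R_n^{(2)}$, and a matching upper bound coming from the explicit generating sets recorded in Proposition \ref{rn}. The corollary is then essentially a bookkeeping consequence of the dimension computation in the preceding proposition.

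First I would record the general inequality $d(G)\geq \dim_{\mathbb{F}_2}G^{(2)}$, valid for any finitely generated group $G$. Indeed, the quotient homomorphism $G\to G^{(2)}$ carries any generating set of $G$ onto a spanning set of the $\mathbb{F}_2$-vector space $G^{(2)}$, and a spanning set of a $k$-dimensional space must have at least $k$ elements. (Recall that $G^{(2)}=G/G(X^2)$ is automatically an elementary abelian $2$-group, since in the quotient every element squares to the identity and a group of exponent $2$ is abelian, so the dimension is indeed well defined.) Applying this to $R_n$ and invoking the previous proposition gives $d(R_1)=d(K)\geq 3$, $d(R_2)\geq 5$, and $d(R_n)\geq n+4$ for $n\geq 3$.

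For the matching upper bounds I would simply count the generators exhibited in Proposition \ref{rn}. The set $\{t,u,v\}$ has three elements, so $d(R_1)\leq 3$; the set $\{x_0,u_0,u_1,v_0,(u,u)\}$ has five elements, so $d(R_2)\leq 5$; and for $n\geq 3$ the generating set $\{x_0,\dots,x_{n-2},u_0,u_1,u_2,v_{n-2},(u,u)\}$ consists of $(n-1)+3+1+1=n+4$ elements, so $d(R_n)\leq n+4$. Combining the two bounds in each case yields the stated equalities.

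There is no genuine obstacle here: the lower bound is the standard Frattini-type argument and the upper bound is read off from the already-verified generating sets. The only point requiring a moment's attention is confirming that the listed tuples have exactly the claimed cardinalities — in particular that the ranges of indices do not overlap and no generator is counted twice — but this is transparent from the indexing in Proposition \ref{rn}.
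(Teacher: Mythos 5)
Your argument is correct and is exactly the reasoning the paper intends: the paper states this corollary without proof because it follows immediately from combining the generator counts in Proposition \ref{rn} (upper bound) with the dimensions of $R_n^{(2)}$ computed in the preceding proposition (lower bound, via the standard fact $d(G)\geq\dim_{\mathbb{F}_2}G^{(2)}$). Your cardinality check $(n-1)+3+1+1=n+4$ for the generating set when $n\geq 3$ is also right.
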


\subsection{Groups $Q_n$}
Let $Q_1=B=\langle b, t, u, v\rangle=K\rtimes\{1,b\}$ and  $Q_n=(K\times R_{n-1})\langle b, (ac)^4\rangle$ for $n>1$.

\begin{proposition}\label{qn}
$Q_n=R_n\rtimes\{1,b\}$.
\[
Q_2=\langle b, x_0, u_0, v_0, (u,u)\rangle.
\]
\[
Q_n=\langle b, x_0, x_1, \dots, x_{n-2}, u_0, u_2, v_{n-2}, (u,u)\rangle.
\]
for $n\geq 3$.
\end{proposition}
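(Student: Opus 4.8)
The plan is to mirror the inductive strategy used in Proposition~\ref{rn}, now tracking the extra generator $b$ and the semidirect product structure. First I would establish the claim $Q_n = R_n \rtimes \{1,b\}$. Recall $R_n = (K \times R_{n-1})\{1,(ac)^4\}$ while $Q_n = (K \times R_{n-1})\langle b, (ac)^4\rangle$, so $Q_n$ contains $R_n$ and is generated by $R_n$ together with $b$. To see this is a semidirect product it suffices to check that $b$ normalizes $R_n$ and that $b \notin R_n$. For normalization I would verify that conjugation by $b$ preserves the generating set of $R_n$ from Proposition~\ref{rn}: the conjugates $x_m^b$ are given by \eqref{xxm} (each lands back in $R_n$ since $u_{m+1} \in R_n$), the conjugates $u_m^b$ by \eqref{uum}, and the conjugates $v_m^b$ by the displayed formula for $v_m^b$; since $b = (a,c)$ acts by $(ac)^4 = x_0 \mapsto x_0$ (as $x_0^a = x_0^c = x_0$ gives $x_0^b = x_0$ from the conjugate list) and $(u,u)^b = (u^a, u^c) = (v, u^{-1})$ needs checking against \eqref{u}, I would confirm each generator's $b$-conjugate is expressible in the listed generators. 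That $b \notin R_n$ follows because $R_n < st_{\mathcal G}(1)$ (all its generators fix the first level) whereas $b$ acts nontrivially on the first level through its $a$-component.

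Next I would extract the explicit generating sets. For the base case $Q_2 = (K \times K)\langle b, x_0 \rangle = R_2 \rtimes \{1,b\}$, so by Proposition~\ref{rn} we have $Q_2 = \langle b, x_0, u_0, u_1, v_0, (u,u)\rangle$, and the asserted list $\langle b, x_0, u_0, v_0, (u,u)\rangle$ should follow by showing $u_1$ is redundant. Here I would use a conjugation identity: from \eqref{u} we have $u^a = v$, and since $u_1 = (1,u)$ while $u_0 = (u,1)$, I expect $u_1$ to be recoverable from $u_0$, $v_0$, and $b$ via the action $b=(a,c)$, which sends $(u,1) \mapsto (u^a, 1^c) = (v,1)$ — so combined with $(u,u)$ one reconstructs $u_1 = (u,u)u_0^{-1}$. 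For $n \geq 3$, starting from $Q_n = R_n \rtimes \{1,b\}$ and Proposition~\ref{rn}'s set $\{x_0,\dots,x_{n-2},u_0,u_1,u_2,v_{n-2},(u,u)\}$ together with $b$, I would show $u_1$ is redundant by the same mechanism, arriving at $\langle b, x_0,\dots,x_{n-2},u_0,u_2,v_{n-2},(u,u)\rangle$.

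The cleanest route is induction via the recursion $Q_{n+1} = (K \times R_n)\langle b, (ac)^4 \rangle$, paralleling the inductive step of Proposition~\ref{rn} exactly. Expanding $K \times R_n$ on generators and using $\psi$-coordinates, I would obtain a raw generating list, then apply the now-familiar reductions: discard $(v,1)$-type generators using $t^{dd^a} = v^{-1}tu$ from \eqref{t}, replace $(u,1)$ by $(u,u)$, discard the deep $(u,u)$-shift using $x_1^{x_0} = (1,1,u,u)x_1$ from \eqref{x1x0}, and eliminate $u_3$ via $u_3 = [x_0,x_2]$ from \eqref{more}. The presence of $b$ merely adds one generator that rides along outside the $K \times R_n$ block and does not interfere with these reductions, since $b$ acts on the first-level coordinates while the reductions happen inside.

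The main obstacle will be the bookkeeping of how $b$ interacts with the $\psi$-coordinate reductions: because $b = (a,c)$ swaps and twists first-level data, I must be careful that the identities used to discard generators (which are stated for conjugation by $x_0 = (dd^a,dd^a)$ and by $dd^a$) remain valid in the presence of $b$, and that no new generator is secretly needed to express a $b$-conjugate. Concretely, I expect the delicate point to be verifying that the semidirect decomposition is genuine — that adjoining $b$ does not collapse any of the $R_n$-generators or force additional relations that would shrink the generating set below the claimed size. Once the normalization and the $b \notin R_n$ facts are pinned down, the generator count reductions are routine applications of the same four identities already used for $R_n$.
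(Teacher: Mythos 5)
Your overall plan---establish $Q_n=R_n\rtimes\{1,b\}$ and then prune the generating list of Proposition~\ref{rn} after adjoining $b$---is the same as the paper's, but the one step that carries the content of the proposition goes wrong. The entire difference between the $R_n$-list and the $Q_n$-list is that $u_1$ may be dropped once $b$ is present, and your mechanism for dropping it rests on two false identities. First, you write $u_0=(u,1)$; in fact $u_0=u=(t,1)$ (and $v_0=v=(1,t)$), so $(u,u)u_0^{-1}=(ut^{-1},u)\neq(1,u)=u_1$. The element $(u,1)$ is not among the reduced generators of $R_n$---it was traded for $(u,u)$ in the proof of Proposition~\ref{rn} precisely by \emph{using} $u_1$---so you cannot reconstruct $u_1$ from $(u,u)$ and the listed generators in this way. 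Second, you assert $x_0^b=x_0$ on the grounds that $x_0^a=x_0^c=x_0$; but $b$ is not a word in $a$ and $c$, and the paper's explicit conjugate list (and \eqref{xxm} with $m=0$) gives $x_0^b=(1,u)x_0=u_1x_0$. Ironically, the identity you deny is exactly the missing ingredient: it yields $u_1=x_0^bx_0^{-1}=x_0^bx_0$ (as $x_0=(ac)^4$ is an involution), so $u_1$ lies in the subgroup generated by $b$ and $x_0$ and can be discarded. This is the paper's one-line justification.

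A smaller error: your reason for $b\notin R_n$ (``$b$ acts nontrivially on the first level'') is false, since $b\in st_{\mathcal G}(1)$; indeed $b(0w)=0a(w)$ and $b(1w)=1c(w)$. The correct observation is that the first coordinate of $\psi(b)=(a,c)$ is $a\notin st_{\mathcal G}(1)$, whereas every element of $R_n=(K\times R_{n-1})\{1,(ac)^4\}$ has first coordinate in $K\langle dd^a\rangle\leq st_{\mathcal G}(1)$. With these two points repaired, the remaining reductions you list (discarding $(v,1)$ and $(1,1,u,u)$, replacing $(u,1)$ by $(u,u)$, eliminating $u_3=[x_0,x_2]$) do go through exactly as in Proposition~\ref{rn}, which is all the paper's proof claims.
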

 \begin{proof}
 The proof is analogous to the proof of Proposition \ref{rn}, using lists of generators for $R_n$ from the statement of Proposition \ref{rn}, and the additional fact that $x_0^b=u_1x_0$, hence $u_1=x_0^bx_0$.
 \end{proof}

 \begin{proposition}
 $\dim_{\mathbb{F}_2}Q_n^{(2)}=n+4$ for $n\geq 3$, $\dim_{\mathbb{F}_2}Q_2^{(2)}=5$, and $\dim_{\mathbb{F}_2}Q_1^{(2)}=4$.
 \end{proposition}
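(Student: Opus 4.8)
The plan is to deduce this proposition from the already-established values of $\dim_{\mathbb{F}_2}R_n^{(2)}$ by feeding the semidirect decomposition of Proposition \ref{qn} into Lemma \ref{lem}. Since $Q_n=R_n\rtimes\{1,b\}$, I take $H=R_n$ and let $b$ also denote the operator induced on $R_n^{(2)}$ by conjugation by $b$ (the same convention used for $dd^a$ in the preceding proposition). Lemma \ref{lem} then gives
\[
\dim_{\mathbb{F}_2}Q_n^{(2)}=\dim_{\mathbb{F}_2}R_n^{(2)}-\dim_{\mathbb{F}_2}(1+b)\bigl(R_n^{(2)}\bigr)+1 .
\]
Because the claimed values $n+4$ (for $n\ge 3$) and $5$ (for $n=2$) coincide exactly with $\dim_{\mathbb{F}_2}R_n^{(2)}$, the statement for $n\ge 2$ reduces to the single assertion $\dim_{\mathbb{F}_2}(1+b)(R_n^{(2)})=1$, while the case $n=1$, where $H=K=R_1$, will instead require $(1+b)(K^{(2)})=0$.

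The bulk of the work is to evaluate $1+b$ on the basis of $R_n^{(2)}$ provided by the minimal generating set of Proposition \ref{rn}, namely $x_0,\dots,x_{n-2},u_0,u_1,u_2,v_{n-2},(u,u)$. Three simplifications govern every computation in the elementary abelian quotient: inverses are trivial, so $g^{-1}\equiv g$; each $u_m$ with $m\ge 3$ is the commutator $[x_{m-3},x_{m-1}]$ by \eqref{more} and hence vanishes; and conjugation by any element of $R_n$ is inner and so acts as the identity on $R_n^{(2)}$. From \eqref{xxm} one reads off $(1+b)(x_0)=u_1$, whereas for $1\le m\le n-2$ the value $(1+b)(x_m)$ is either $0$ (when $m\equiv 1 \bmod 3$) or a factor $u_{m+1}$ with $m+1\ge 3$, hence $0$ as well. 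The relation \eqref{uum}, together with $u_m^{-1}\equiv u_m$, shows $(1+b)(u_i)=0$ for each $i$, and the conjugation rules for $v_m$ under $b$ give $(1+b)(v_{n-2})=0$ since every listed value is a conjugate of $v_{n-2}^{\pm 1}$ and therefore congruent to $v_{n-2}$.

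The delicate basis vector is $(u,u)$, and this is where I expect the real obstacle. Using $b=(a,c)$ and \eqref{u} one gets $(u,u)^b=(u^a,u^c)=(v,u^{-1})$, so $(1+b)((u,u))$ is the class of $(u,u)(v,u^{-1})$, which is not visibly trivial. To resolve it I will reduce $(v,u^{-1})=(v,1)\,u_1^{-1}$ to the chosen basis. From the identity $(t,1)^{x_0}=(v,1)^{-1}(t,1)(u,1)$ recorded in the proof of Proposition \ref{rn}, combined with $(t,1)^{x_0}\equiv(t,1)$ (innerness of conjugation by $x_0$), I obtain $(v,1)\equiv(u,1)$ in $R_n^{(2)}$; and since $(u,u)=(u,1)u_1$ this gives $(u,1)\equiv(u,u)+u_1$. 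Substituting yields $(v,u^{-1})\equiv(u,u)+u_1+u_1=(u,u)$, so that $(1+b)((u,u))$ is a square and hence $0$. Consequently $(1+b)(R_n^{(2)})$ is spanned by the single nonzero basis vector $u_1$, its dimension is $1$ for all $n\ge 2$, and the displayed formula collapses to $\dim_{\mathbb{F}_2}Q_n^{(2)}=\dim_{\mathbb{F}_2}R_n^{(2)}$.

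Finally, for $n=1$ we have $Q_1=K\rtimes\{1,b\}$, and on the basis $t,u,v$ of $K^{(2)}$ the formulas \eqref{t}, \eqref{u}, \eqref{v} show that each generator is sent by $b$ to its inverse or to a conjugate of $g^{\pm 1}$, hence is fixed modulo squares; thus $(1+b)(K^{(2)})=0$ and Lemma \ref{lem} yields $\dim_{\mathbb{F}_2}Q_1^{(2)}=3+1=4$. I emphasize that all entries except $(u,u)$ collapse immediately from the conjugation tables; the one genuine difficulty is that $(u,u)^b$ leaves the evident span of the $u$- and $v$-type generators, and only the $x_0$-conjugation identity lets one recognize its class as $(u,u)$, which is precisely the computation that makes the image one-dimensional rather than two-dimensional.
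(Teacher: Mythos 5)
Your proposal is correct and follows the same route as the paper: apply Lemma \ref{lem} to the decomposition $Q_n=R_n\rtimes\{1,b\}$, evaluate $1+b$ on the generating set of Proposition \ref{rn} to find $(1+b)(R_n^{(2)})=\langle u_1\rangle$ (including the same use of $(t,1)^{x_0}=(v,1)^{-1}(t,1)(u,1)$ to kill $(1+b)((u,u))$), and handle $n=1$ via $(1+b)(K^{(2)})=0$. The arithmetic $\dim Q_n^{(2)}=\dim R_n^{(2)}-1+1$ matches the paper's conclusion exactly.
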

 \begin{proof}
Note that to compute $\dim_{\mathbb{F}_2}Q_n^{(2)}$ we may use the Lemma \ref{lem}, since $Q_n=R_n\rtimes\{1,b\}$. Thus we need to compute the induced action of $b$ on $R_n^{(2)}$.

 Note that $(1+b)(K^{(2)})=0$, by \eqref{t}, \eqref{u}, \eqref{v}. Also, $(1+b)(x_0)=u_1$, $(1+b)(x_m)=0$ for $m\geq 1$ by \eqref{xxm}. $(1+b)(u_m)=0$ for $m\geq 0$ by \eqref{uum}. Finally, $(1+b)((u,u))=(u,u)+(v,1)+(1,u)$, and since $(t,1)^{x_0}=(v,1)^{-1}(t,1)(u,1)$, it follows that $(v,1)+(u,1)=0$, and thus $(1+b)((u,u))=(u,u)+(v,1)+(1,u)=0$.

 Hence $(1+b)(R_n^{(2)})=\langle u_1\rangle$, and thus  $\dim_{\mathbb{F}_2}Q_2^{(2)}=5-1+1=5$, and $\dim_{\mathbb{F}_2}Q_n^{(2)}=n+4-1+1=n+4$.
 \end{proof}

 \begin{corollary}
$d(Q_1)=4$, $d(Q_2)=5$, $d(Q_n)=n+4$ for $n\geq 3$.
\end{corollary}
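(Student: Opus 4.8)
The plan is to sandwich $d(Q_n)$ between two bounds that will meet exactly. For the lower bound I would invoke the completely general observation that for any group $G$ one has $d(G)\geq\dim_{\mathbb{F}_2}G^{(2)}$. Indeed, $G^{(2)}=G/G(X^2)$ is by construction an elementary abelian $2$-group, hence an $\mathbb{F}_2$-vector space, and the canonical projection $G\to G^{(2)}$ carries any generating set of $G$ onto a spanning set of $G^{(2)}$; since a spanning set of a vector space has at least $\dim$ elements, this yields $d(G)\geq\dim_{\mathbb{F}_2}G^{(2)}$. Applying this with $G=Q_n$ and substituting the dimensions computed in the preceding proposition gives $d(Q_1)\geq 4$, $d(Q_2)\geq 5$, and $d(Q_n)\geq n+4$ for $n\geq 3$.

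For the matching upper bounds I would simply read off the explicit generating sets furnished by Proposition \ref{qn} (together with $Q_1=B=\langle b,t,u,v\rangle$) and count their cardinalities. The set $\{b,t,u,v\}$ has $4$ elements, the set $\{b,x_0,u_0,v_0,(u,u)\}$ has $5$ elements, and for $n\geq 3$ the set $\{b,x_0,x_1,\dots,x_{n-2},u_0,u_2,v_{n-2},(u,u)\}$ has $1+(n-1)+2+1+1=n+4$ elements. Hence $d(Q_1)\leq 4$, $d(Q_2)\leq 5$, and $d(Q_n)\leq n+4$. Combining the two families of inequalities forces equality in every case, which is precisely the assertion of the corollary.

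In this sense the statement is purely formal: all the genuine work has already been done, in establishing the explicit generating lists (which supply the upper bound) and in the $\mathbb{F}_2$-dimension computation of the previous proposition (which supplies the lower bound). I do not expect a real obstacle here; the only thing to verify is the bookkeeping that each listed generating set has cardinality exactly equal to the corresponding dimension—that the named generators are genuinely distinct and that the arithmetic $1+(n-1)+2+1+1=n+4$ is correct—which is immediate from the lists in Proposition \ref{qn}. The same three-line argument proves the analogous corollary for the groups $R_n$ verbatim, using the generating sets of Proposition \ref{rn} in place of those of Proposition \ref{qn} and the dimension formula for $\dim_{\mathbb{F}_2}R_n^{(2)}$.
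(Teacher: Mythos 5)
Your proof is correct and is exactly the argument the paper intends (the corollary is stated without proof, immediately after the generating sets of Proposition \ref{qn} and the computation of $\dim_{\mathbb{F}_2}Q_n^{(2)}$, precisely because the upper bound from counting generators and the lower bound $d(G)\geq\dim_{\mathbb{F}_2}G^{(2)}$ coincide). The cardinality bookkeeping $1+(n-1)+2+1+1=n+4$ checks out against the listed generators $b,x_0,\dots,x_{n-2},u_0,u_2,v_{n-2},(u,u)$.
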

Finally we  introduce   groups $P_n$.

 \subsection{Groups $P_n$}
 Let $P_n=st_{\mathcal G}(1^n)$. Then $P_1=st_{\mathcal G}(1)$, and $P_n=(K\times Q_{n-1})\langle c, (ac)^4\rangle$ for $n>1$.

 \begin{proposition}\label{gener0}
 $P_n=R_n\rtimes\{1,b,c,d\}$ and
 \[P_n=\langle c, d, x_0, x_1,\dots, x_{n-2}, u_0, v_{n-2}, (u,u)\rangle,
 \] for $n\geq 2$.
 \end{proposition}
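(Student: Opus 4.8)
The plan is to prove both assertions together by induction on $n$, in the spirit of Propositions~\ref{rn} and~\ref{qn}: unwind the defining recursion $P_n=(K\times Q_{n-1})\langle c,(ac)^4\rangle$ and then trim the resulting generating set with the help of the conjugation tables. The identity that makes everything work is $d=(1,b)$ from \eqref{recursion1}. Since Proposition~\ref{qn} gives $Q_{n-1}=R_{n-1}\rtimes\{1,b\}$, the element $b$ occupying the second coordinate of $K\times Q_{n-1}$ is precisely $d$, so that
\[
K\times Q_{n-1}=(K\times R_{n-1})\langle d\rangle,\qquad P_n=\langle K\times R_{n-1},\,c,\,d,\,(ac)^4\rangle=\langle R_n,c,d\rangle.
\]
In other words the extra Klein element $d$ is produced automatically, $c$ is the newly adjoined generator, and $b=cd$; this already explains why the complement grows from $\{1,b\}$ for $Q_n$ to $\{1,b,c,d\}$ for $P_n$.

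Next I would establish the decomposition $P_n=R_n\rtimes\{1,b,c,d\}$. First, because $(ac)^4,u,v\in K$ and $\psi(K)\supset K\times K$, a one-line induction on $R_n=(K\times R_{n-1})\{1,(ac)^4\}$ shows $R_n\subseteq K$; as $b,c,d\notin K$, this gives $\{1,b,c,d\}\cap R_n=\{1\}$. It remains to see that $R_n$ is normal in $P_n=\langle R_n,c,d\rangle$, i.e.\ that $b,c,d$ normalize $R_n$, which I would prove by induction using the coordinate forms $b=(a,c)$, $c=(a,d)$, $d=(1,b)$. Each of these sends $K\times R_{n-1}$ into itself: the left coordinate lands in $K$ since $K\lhd\mathcal G$, and the right coordinate lands in $R_{n-1}$ by the inductive hypothesis. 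Moreover, by \eqref{xxxm} (case $m=0$) one has $x_0^d=u_1x_0$ with $u_1=(1,u)\in K\times R_{n-1}$, and $x_0^c=x_0$, so the nontrivial coset $(K\times R_{n-1})(ac)^4$ is preserved as well; hence $b,c,d$ normalize $R_n$. Combined with $\langle c,d\rangle=\{1,b,c,d\}$ and the trivial intersection, this yields the internal semidirect product.

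For the generating set I would feed the generators of $R_n$ from Proposition~\ref{rn} into $P_n=\langle R_n,c,d\rangle$, giving
\[
P_n=\langle c,d,x_0,\dots,x_{n-2},u_0,u_1,u_2,v_{n-2},(u,u)\rangle\qquad(n\geq 3),
\]
and the analogous list with $u_0,u_1,v_0,(u,u)$ when $n=2$. It then suffices to delete $u_1$ and, for $n\geq 3$, also $u_2$. Indeed $x_0^d=u_1x_0$ gives $u_1=x_0^dx_0$, and \eqref{xxxm} with $m=1$ (the case $3\mid(m-1)$) gives $x_1^d=u_2x_1$, whence $u_2=x_1^dx_1$; since $x_0$ and $x_1$ are involutions, both $u_1$ and $u_2$ lie in $\langle d,x_0,x_1\rangle$ and may be discarded. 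This produces exactly the stated lists.

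The substantive step is the normality of $R_n$ in $P_n$: it is the only place where the coordinate identities $b=(a,c)$, $c=(a,d)$, $d=(1,b)$ and the conjugation tables are genuinely needed, and it has to be run by induction together with the inclusion $R_n\subseteq K$. The one conceptual point behind the whole argument is the identification $d=(1,b)$, which silently supplies the fourth Klein generator; once it is recognized, the passage from $Q_n$ to $P_n$ parallels the passage from $R_n$ to $Q_n$, and the remaining verifications — including the base case $n=2$, where only $u_1$ needs to be recovered — are routine.
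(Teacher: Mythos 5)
Your argument is correct and is essentially the proof the paper intends: the paper's own proof is the one-line remark that the claim follows ``analogously'' to Propositions~\ref{rn} and~\ref{qn} using $x_1^d=u_2x_1$, and you have carried out exactly that program --- unwinding the recursion $P_n=(K\times Q_{n-1})\langle c,(ac)^4\rangle$ via $d=(1,b)$ and discarding $u_1$ and $u_2$ through the conjugation relations $x_0^d=u_1x_0$ and $x_1^d=u_2x_1$. The only added value is that you make explicit the semidirect-product structure (normality of $R_n$, the inclusion $R_n\subseteq K$ giving trivial intersection with $\{1,b,c,d\}$), which the paper leaves implicit; these verifications are all sound.
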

 \begin{proof}
 Analogous to the proofs of Proposition \ref{rn} and Proposition \ref{qn}, using the additional fact that $x_1^d=u_2x_1$ by \eqref{xxxm}.
 \end{proof}

 \begin{proposition}
 \begin{equation}
 \label{gener}
 \dim_{\mathbb{F}_2}P_n^{(2)}=n+4
 \end{equation}
  for $n\geq 2$, and $\dim_{\mathbb{F}_2}P_1^{(2)}=4$.

 \end{proposition}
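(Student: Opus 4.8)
The plan is to realize $P_n$ as a single $\mathbb{Z}/2\mathbb{Z}$-extension of $Q_n$ and then apply Lemma \ref{lem}, exactly as the preceding computation of $\dim_{\mathbb{F}_2}Q_n^{(2)}$ treated $Q_n=R_n\rtimes\{1,b\}$. Since $b$ and $c$ commute in $\mathcal{G}$, the involution $c$ normalizes $Q_n=R_n\rtimes\{1,b\}$ and does not lie in it, so $P_n=Q_n\rtimes\{1,c\}$. Lemma \ref{lem} then reduces the problem to computing the operator $1+c$ induced by conjugation on $Q_n^{(2)}$, via
\[
\dim_{\mathbb{F}_2}P_n^{(2)}=\dim_{\mathbb{F}_2}Q_n^{(2)}-\dim_{\mathbb{F}_2}(1+c)(Q_n^{(2)})+1 .
\]
As the values $\dim_{\mathbb{F}_2}Q_n^{(2)}$ are already known, everything comes down to identifying the image of $1+c$.

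Next I would evaluate $1+c$ on the generators of $Q_n^{(2)}$ supplied by Proposition \ref{qn}. The easy cases are immediate: $b^c=b$ gives $(1+c)(b)=0$; by \eqref{u}, $u^c=u^{-1}$, so $(1+c)(u_0)=0$; and the conjugation rule for $x_m^c$ shows $(1+c)(x_m)=0$ when $3\mid m$ and $(1+c)(x_m)=u_{m+1}$ when $3\nmid m$. For the remaining generators I would pass through the section decomposition $c=(a,d)$ together with the recursions $u_m=(1,u_{m-1})$, $v_m=(1,v_{m-1})$ and the level-$0$ formulas \eqref{t}, \eqref{u}, \eqref{v}. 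This yields $u_2^c=u_2^{-1}$, hence $(1+c)(u_2)=0$; it yields $v_{n-2}^c$ as a conjugate of $v_{n-2}^{-1}$ for $n\ge 3$ or, in the base case $n=2$, as the product $v_0^c=v_1^{-1}v_0$; and for $(u,u)$ one computes $(u,u)^c=(v,u)$.

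Now I assemble the image using two vanishing facts. First, by \eqref{more}, $u_m=[x_{m-3},x_{m-1}]$ for $m\ge 3$, so $[u_m]=0$ in the abelian group $Q_n^{(2)}$ for every $m\ge 3$; consequently all contributions $(1+c)(x_m)=u_{m+1}$ with $3\nmid m$ and $m\ge 2$ die, and the unique survivor among the $x_m$ is $(1+c)(x_1)=u_2\ne 0$. Second, the branch relations of $Q_n$, namely $[(v,1)]=[(u,1)]$ (established in the proof of Proposition \ref{rn}) together with $[v_1]=[u_1]=0$ valid in $Q_n^{(2)}$, collapse the remaining terms: $(u,u)^c\equiv(u,u)$, so $(1+c)((u,u))=0$, and $(1+c)(v_{n-2})=0$ in all cases. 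Hence for $n\ge 3$ the image is the one-dimensional space $\langle u_2\rangle$ and
\[
\dim_{\mathbb{F}_2}P_n^{(2)}=(n+4)-1+1=n+4 ,
\]
while for $n=2$ every generator is annihilated, the image is trivial, and $\dim_{\mathbb{F}_2}P_2^{(2)}=5-0+1=6=2+4$. The case $n=1$ is handled the same way through $P_1=Q_1\rtimes\{1,c\}$: here $t$ is a generator of $Q_1$ and $t^c=t^{-1}v$ by \eqref{t}, so the single surviving contribution is $(1+c)(t)=v$, the image is $\langle v\rangle$, and $\dim_{\mathbb{F}_2}P_1^{(2)}=4-1+1=4$.

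The step I expect to be the main obstacle is the bookkeeping that forces the auxiliary generators $(u,u)$, $u_2$ and $v_{n-2}$ to contribute nothing. Because conjugation by $c=(a,d)$ acts through the deeper sections, the formulas \eqref{t}--\eqref{v} insert spurious factors (such as the $v^{-1}$ in $t^d=v^{-1}t$, or the passage $(u,u)^c=(v,u)$), and one must verify that each such factor is killed in $Q_n^{(2)}$ either by the commutator identity $[u_m]=0$ for $m\ge 3$ or by the branch relations $[(v,1)]=[(u,1)]$ and $[v_1]=[u_1]=0$. Isolating precisely the single surviving direction, which is $\langle u_2\rangle$ for $n\ge 3$, $\langle v\rangle$ for $n=1$, and nothing for $n=2$, is the delicate heart of the argument; once this is done the dimension count follows at once from Lemma \ref{lem}.
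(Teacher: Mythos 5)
For $n\ge 2$ your argument is correct and is essentially the paper's computation repackaged: the paper writes $P_n=R_n\rtimes\{1,b,c,d\}$ and applies a Klein-group generalization of Lemma \ref{lem} with $V_n=(1+b)R_n^{(2)}+(1+c)R_n^{(2)}$, while you factor the same extension into two steps and apply the original lemma to $P_n=Q_n\rtimes\{1,c\}$; the $(1+c)$-computations (only $x_1$ survives, giving $\langle u_2\rangle$ for $n\ge 3$ and the trivial image for $n=2$, with $(u,u)$ and $v_{n-2}$ killed by $[(v,1)]=[(u,1)]$ and by $u_m=[x_{m-3},x_{m-1}]$) match what the paper needs and check out.

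The genuine gap is the case $n=1$: the identity $P_1=Q_1\rtimes\{1,c\}$ is false. Here $Q_1=B=\langle b\rangle^{\mathcal G}=K\rtimes\{1,b\}$ has index $8$ in $\mathcal G$, hence index $4$ in $P_1=st_{\mathcal G}(1)$, so $[P_1:Q_1\langle c\rangle]=2$ and the quotient $P_1/Q_1$ is the Klein group generated by the images of $c$ \emph{and} $c^a$ (equivalently $d^a$), not a single $\mathbb{Z}/2\mathbb{Z}$. Note that the recursion $P_n=(K\times Q_{n-1})\langle c,(ac)^4\rangle$ and the decomposition $P_n=R_n\rtimes\{1,b,c,d\}$ of Proposition \ref{gener0} are only asserted for $n\ge 2$, and indeed $K\rtimes\{1,b,c,d\}$ has index $4$ in $st_{\mathcal G}(1)$, so no analogue of your decomposition holds at level $1$. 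Lemma \ref{lem} therefore does not apply as you use it, and your computation $4-1+1=4$ actually gives $\dim_{\mathbb{F}_2}(B\langle c\rangle)^{(2)}$ rather than $\dim_{\mathbb{F}_2}P_1^{(2)}$; that the numerical value agrees is not justified by your argument. You would either need to adjoin the second generator $c^a$ and use the generalized lemma (computing $(1+c^a)$ on $Q_1^{(2)}$ as well), or do what the paper does and handle $n=1$ directly from the explicit presentation $P_1=st_{\mathcal G}(1)=\langle d,c,d^a,c^a\rangle$ by four involutions.
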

 \begin{proof}
 For $n=1$ it follows from $P_1=st(1)=\langle d,c,d^a,c^a\rangle$.
 For $n\geq 2$ it follows from $P_n=R_n\rtimes\{1,b,c,d\}$ and a slight generalization of the Lemma \ref{lem}: $P_n^{(2)}=(R_n^{(2)}/V_n)\oplus\mathbb{F}_2^2$, where $V_n=(1+b)R_n^{(n)}+(1+c)R_n^{(n)}$.
 \end{proof}

Now  we  are  ready  to  prove  theorem  \ref{th2}.  As  $\mathcal G$ acts  level transitive  on $\mathcal T$, for  each  vertex $v$ of  the  level  $n$ the   group
$st_{\mathcal G}(v)$  is  conjugate  in   $\mathcal G$  to  $P_n$.   From  Proposition \ref{gener0} it  follows  that  the minimum  number  of  generators  of $P_n$ is $\leq  n+4$.  The  last  proposition  shows  that  it  is  exactly  $n+4$  when $n \geq 2$.  As  index of  $P_n$  in $\mathcal G$   is  $2^n$   (because  of the level  transitivity  of  $\mathcal G$)  the  ratio

\[
\frac{d(H)-1}{[\mathcal{G}:H]}=\frac{n+3}{2^n}
\]

for  $H=st_{\mathcal G}(v)$ where  vertex $v$ belongs  to  the  level  $n\geq 2$. If $v$  is  a  vertex  of  the  first  level  then  $H=st_{\mathcal G}(1)$  is a  subgroup  of  index  2  and is $4$-generated  (by  elements  $b,c,b^a,c^a$).  Taking  all  this  into  account we  get  the  conclusion  of the  theorem  \ref{th2}.

\bibliographystyle{plain}
\bibliography{mylib}

\end{document}